\newcommand{\NEF}{natural exponential family (NEF)\renewcommand{\NEF}{NEF}}
 \newcommand{\ceta}{\alpha}
 \newcommand{\ctheta}{\beta}
\title{Stitching pairs of L\'evy processes into harnesses}
\date{Created: July 21, 2010. Printed \today. File \jobname.tex}
\author{
W{\l}odek  Bryc
}
\address{
Department of Mathematics,
University of Cincinnati,
PO Box 210025,
Cincinnati, OH 45221--0025, USA}
\email{Wlodzimierz.Bryc@UC.edu}
\author{Jacek Weso{\l}owski}
\address{ Faculty of Mathematics and Information Science\\
Warsaw University of Technology\\ pl. Politechniki 1\\ 00-661
Warszawa, Poland}
\email{wesolo@alpha.mini.pw.edu.pl}
\keywords{Meixner processes; martingale characterization; quadratic conditional variances; harnesses; natural exponential families of L\'evy processes}
\subjclass[2010]{60G48; 60J99; 60G51}
\numberwithin{equation}{section}
\newcommand{\be}{\begin{equation}}
\newcommand{\ee}{\end{equation}}
      \newtheorem{theorem}{Theorem}[section]
       \newtheorem{proposition}[theorem]{Proposition}
       \newtheorem{lemma}[theorem]{Lemma}
              \newtheorem{claim}[theorem]{Claim}
       \newtheorem{remark}{Remark}[section]
\theoremstyle{definition}
\newtheorem{definition}{Definition}[section]
\newtheorem{assumption}{Assumption}
\newtheorem{example}{Example}[section]
\def\RR{{\mathds R}}
\def\Z{{\mathbf Z}}
\def\<{\langle}
\def\>{\rangle}
\def\E{{\mathbb E}}
\def\var{\mathbb{V}\mathrm{ar}}
\def\cov{\mathbb{C}\mathrm{ov}}
\def\Var{\var}
 \def\V{\var}
\def\Cov{\cov}
\newcommand{\calF}{\mathcal{F}}
\newcommand{\TT}{\mathcal{T}}
\newcommand{\1}{\mathds{1}}
\newcommand{\la}{\lambda}
\begin{document}

\begin{abstract}
We consider natural exponential families of L\'evy processes with randomized parameter.  Such processes are Markov, and under suitable assumptions, pairs of  such processes with shared randomization  can be ``stitched together" into a single harness. The stitching consists of deterministic reparametrization of the time for both processes, so that they run on adjacent time intervals, and of the choice of the appropriate law at the boundary.

Processes in the L\'evy-Meixner class have an additional property that they are quadratic harnesses, and in this case stitching constructions produce quadratic harnesses on $[0,\infty)$.
\end{abstract}

\maketitle

  \section{Introduction}
  Two ad hoc constructions of  quadratic harnesses from transition probabilities led to Markov processes that  naturally split into a pair of conditionally independent Poisson processes (\cite[Proposition 4.1]{Bryc-Wesolowski-05}) or a pair of conditionally independent negative binomial processes (\cite[Proposition 5.1]{Maja:2009}) with random parameters.
 This paper proceeds in the opposite direction, producing   harnesses and quadratic harnesses directly from a stitching construction rather than from transition probabilities.
The appropriate general setting for this approach is to consider natural exponential families of L\'evy processes 
which come with a parameter ready for randomization, and where one can draw on powerful results from \cite{Diaconis:1979} to determine randomizations responsible for martingale property.
    \subsection{Exponential families of L\'evy processes}\label{Sect:MCRMP}
We recall the construction of the \NEF\ of a  L\'evy process from \cite[Chapter 2]{Kuchler:1997}. We consider a L\'evy process $(\xi_t)$ on a probability space $(\Omega,\calF,P)$ with the natural (past)  filtration $\calF_t$. Since we will be working only with finite-dimensional distributions, we  take $\Omega=\{f:[0,\infty)\to \RR\}$ with
$\calF_t$ generated by Borel sets that do not depend on the trajectories $f$ after time $t$.
Let $L(\theta)=\E\exp(\theta \xi_1)$ and $\kappa(\theta)=\log L(\theta)$ be well defined for $\theta$ in an open interval $(\theta_0,\theta_1)\subset\RR$ and let $g(dx)$ be the law of $\xi_1$.
We set $\kappa(\theta)=\infty$ outside of $(\theta_0,\theta_1)$.

The \NEF\ generated by the law $P$ of process $(\xi_t)$ is a measure $P_\theta$ on the $\sigma$-field generated by $\bigcup_{t>0}\calF_t$ such that
\begin{equation}\label{P_theta}
P_\theta \big|_{\calF_t}(df)=\exp(\theta f(t)-t\kappa(\theta))P\big|_{\calF_t}(df).
\end{equation}
Note that  \eqref{P_theta} is just a prescription that generates a consistent family of finite dimensional distributions, so such a measure exists on $\Omega=\RR^{[0,\infty)}$. It will be convenient to denote by $(X_t^{(\theta)})$ a L\'evy  process on some abstract probability space, with the same finite dimensional distributions as the process  $\Omega\ni f  \mapsto f(t)\in\RR$  under the law $P_\theta$. %
It is well known (\cite[Section 2.3]{Jorgensen:1997}) that $\E(X_t^{(\theta)})=t\kappa'(\theta)$ and $\Var(X_t^{(\theta)})=t\kappa''(\theta)$.

\subsection{Stitching construction}\label{Sect:SCG}
We are interested in processes with random $\theta$. To this end, we choose a probability measure $h(d\theta)$ on Borel subsets of $(\theta_0,\theta_1)$ and introduce   probability measure $Q(df)=\int P_\theta(df) h(d\theta)$. It will be convenient to denote by  $(Y_t)$ any process on some abstract probability space with the same finite dimensional distributions as  the coordinate process $\Omega\ni f\mapsto f(t)$ under $Q$.
Denoting by $\Theta$ the random variable with law $h(d\theta)$, alternatively we can view $(Y_t)$ as a process 
which conditionally on $\Theta=\theta$ has the same finite dimensional laws as the process $(X_t^{(\theta)})$.
An example of such situation is randomized hyperbolic secant process (Example \ref{P1-Meix} below) studied in \cite{grigelionis1999processes}.

 Our  goal is to   stitch together   pairs of randomized conditionally independent  L\'evy processes into a single process.
  To do so,  we  consider a pair  of $\Theta$-conditionally independent Markov processes  $(Y_t)$ and $(Y_t')$ which conditionally on $\Theta$ have the same  laws.  %

  The stitched process, with auxiliary parameters $r,v>0$ and $p\in\RR$, is given by
\begin{equation}\label{Exp-Z}
Z_t=\begin{cases}
 \tfrac{1-t}{rv}Y_{rt/(1-t)}-t\tfrac{p}{rv},& 0\leq t <1,\\
 \\
\tfrac{1}{v} (\kappa'(\Theta)-p/r),& t=1,\\
\\
 \tfrac{t-1}{rv}Y_{r/(t-1)}'-\tfrac{p}{rv}, & t>1.
\end{cases}
\end{equation}

We remark that parameter $v$ is used in \eqref{Exp-Z} solely for standardization: in the square-integrable case we use it so that $Z_1$ has unit variance.
 To motivate the use of parameters $p,r$, suppose that $(Y_t+p)/(t+r)$ is a martingale so that $\E\left(\kappa'(\Theta)\right)=p/r$. Then it is not difficult to verify that $(Z_t)$
 is a  martingale with mean zero.

We note that $(Z_t)_{t>1}$ is the time inverse of $(Z_t)_{0<t<1}$. This observation reduces the number of cases to be considered in some proofs.
It is also convenient to observe that $(Z_t)_{t>0}$ is a Markov process. This follows by construction from Markov property of   $(Y_t)$ and from $\Theta$-conditional independence of $(Y_t)$ and  $(Y_t')$.

The paper is organized as follows. In Section \ref{Sect:Cond:str} we investigate 
conditional properties
of process $(Y_t)$.
In Section \ref{Sect:Harness} we show that  under appropriate randomization $(Z_t)$ is a harness. In Section \ref{Sect:Meinxer-Examples} we discuss  L\'evy-Mexiner processes   and their randomization. Our main result in Section \ref{Sect:SC} states that stitching of
  a pair of L\'evy-Meixner processes gives a quadratic harness. The proof is in Section \ref{Sect:Proofs}.

\section{Conditional properties of process $(Y_t)$}\label{Sect:Cond:str}

\begin{proposition}\label{L-Markov}
$(Y_t)$ is a Markov process with $Y_0=0$ and
with transition probabilities
\begin{equation}\label{Exp-transitions}
P_{s,t}(x,dy)=\frac{H(t,y)}{H(s,x)} g_{t|s}(dy|x), \; s<t,
\end{equation}
 where $g_{t|s}(dy|x)$ denotes the regular version of the conditional law of $\xi_{t}$ given $\xi_{s}$, and
\begin{equation} \label{H-exp}
H(t,x)=\int_{\theta_0}^{\theta_1} e^{\theta x -t\kappa(\theta)} h(d\theta).
\end{equation}

Furthermore, the transition probabilities for $(Y_t)$  in reversed time are the same as for the L\'evy process $(\xi_t)$,
\begin{equation}\label{Inverse-trans}
P_{t,s}(y,dx)=g_{s|t}(dx|y),\; s<t.
\end{equation}
 \end{proposition}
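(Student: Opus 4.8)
The plan is to compute the finite-dimensional distributions of $(Y_t)$ under $Q$ explicitly and then read off both sets of transition kernels. First I would record that the tilting factor in \eqref{P_theta} depends on the trajectory only through its terminal value, so that integrating out $\theta$ gives the Radon-Nikodym derivative of $Q$ with respect to $P$ on $\calF_t$:
\[
\frac{dQ}{dP}\Big|_{\calF_t} = \int_{\theta_0}^{\theta_1} e^{\theta \xi_t - t\kappa(\theta)}\, h(d\theta) = H(t, \xi_t),
\]
with $H$ as in \eqref{H-exp} (Fubini being legitimate since $\kappa$ is finite on $(\theta_0,\theta_1)$). Consequently, for any $0 < t_1 < \dots < t_n$, the joint law of $(Y_{t_1},\dots,Y_{t_n})$ equals $H(t_n, x_n)$ times the joint law of $(\xi_{t_1},\dots,\xi_{t_n})$ under $P$. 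Since the tilt is trivial at $t=0$ and $\xi_0=0$, we get $Y_0=0$; it then remains to exhibit this reweighted law as a product of the kernels in \eqref{Exp-transitions}.

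The key identity is that $H$ is space-time harmonic for the L\'evy semigroup. Using independent increments one has $\E[e^{\theta \xi_t}\mid \xi_s = x] = e^{\theta x + (t-s)\kappa(\theta)}$, and hence, after interchanging the $\theta$- and $y$-integrations,
\[
\int H(t,y)\, g_{t|s}(dy|x) = \int_{\theta_0}^{\theta_1} e^{\theta x - s\kappa(\theta)}\, h(d\theta) = H(s,x), \qquad s<t.
\]
This shows at once that each kernel $P_{s,t}$ in \eqref{Exp-transitions} is a genuine probability kernel, since $H(s,x)$ is precisely its normalizing total mass, and it also drives the telescoping. Factoring the $P$-law of $(\xi_{t_1},\dots,\xi_{t_n})$ by the Markov property of the L\'evy process into $g_{t_1}(dx_1)\prod_{k\geq 2} g_{t_k|t_{k-1}}(dx_k|x_{k-1})$ and multiplying by $H(t_n,x_n)$, I would then write $H(t_n,x_n)$ as the telescoping product $\prod_k \big[H(t_k,x_k)/H(t_{k-1},x_{k-1})\big]$ (with $H(0,0)=1$), which reproduces exactly $\prod_k P_{t_{k-1},t_k}(x_{k-1},dx_k)$. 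This simultaneously establishes the Markov property and identifies the transition probabilities \eqref{Exp-transitions}.

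For the reversed transitions \eqref{Inverse-trans}, it suffices, since $(Y_t)$ is now known to be Markov, to compute the two-point conditional law of $Y_s$ given $Y_t$ for $s<t$. Disintegrating the $P$-joint law of $(\xi_s,\xi_t)$ the other way, $g_s(dx_s)\,g_{t|s}(dx_t|x_s) = g_t(dx_t)\,g_{s|t}(dx_s|x_t)$, the $Q$-joint law of $(Y_s,Y_t)$ becomes $H(t,x_t)\,g_t(dx_t)\,g_{s|t}(dx_s|x_t)$. The crucial feature here is that the reweighting factor $H(t,x_t)$ depends only on the later coordinate $x_t$: it is absorbed into the $Y_t$-marginal $H(t,x_t)\,g_t(dx_t)$ and cancels in the conditional, leaving $P_{t,s}(x_t,dx_s) = g_{s|t}(dx_s|x_t)$, which is \eqref{Inverse-trans}.

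I expect the main obstacle to be technical bookkeeping rather than conceptual: justifying the Fubini interchanges and confirming the $\calF_t$-measurability and consistency needed to pass from the tilted one-dimensional density to the $n$-point reweighting. All the conceptual content sits in the harmonic identity $\int H(t,y)\,g_{t|s}(dy|x)=H(s,x)$, after which both \eqref{Exp-transitions} and \eqref{Inverse-trans} follow by elementary disintegration.
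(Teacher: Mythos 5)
Your proposal is correct and follows essentially the same route as the paper's proof: both derive the finite-dimensional laws of $(Y_t)$ as $H(t_n,x_n)$ times those of $(\xi_t)$ (formula \eqref{Exp-Markov} in the paper), factor this reweighted law into the kernels \eqref{Exp-transitions} to get the Markov property, and obtain \eqref{Inverse-trans} by disintegrating the two-point law the other way, using that the weight $H(t,y)$ depends only on the later coordinate. The only cosmetic differences are that you make the normalization identity $\int H(t,y)\,g_{t|s}(dy|x)=H(s,x)$ explicit (in the paper it is implicit, being the case $\varphi\equiv 1$ of the conditional-expectation computation) and telescope over all time points at once rather than conditioning one step at a time.
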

\begin{proof}
 For $s_1<s_2<\dots<s_n$, let $g_{s_1,\dots,s_n}(dx_1,dx_2,\dots dx_n)$ denote the finite dimensional distribution of process $(\xi_t)$. From \eqref{P_theta} it follows that the finite-dimensional laws of process $(Y_t)$ are
 \begin{equation}\label{Exp-Markov}
G_{s_1,\dots,s_n}(dx_1,dx_2,\dots dx_n)= H(s_n,x_n) g_{s_1,\dots,s_n}(dx_1,dx_2,\dots dx_n),
 \end{equation}
where $H(t,x)$ is defined in \eqref{H-exp}.
Since $(\xi_t)$ is a Markov process, for any bounded measurable functions $\varphi,\psi$ we have
\begin{multline*}
\int  \varphi(x_n) \psi(x_1,\dots,x_{n-1})G_{s_1,\dots,s_n}(dx_1,dx_2,\dots dx_n)
\\=
\int_{\RR^n} \varphi(x_n)\psi(x_1,\dots,x_{n-1})H(s_n,x_n) g_{s_1,\dots,s_n}(dx_1,dx_2,\dots dx_n)
\\
=\int_{\RR^{n-1}} \psi(x_1,\dots,x_{n-1})
\left(\int_\RR \varphi(x_n)\frac{H(s_n,x_n)}{H(s_{n-1},x_{n-1})}g_{s_n|s_{n-1}}(dx_n|x_{n-1})\right)
 G_{s_1,\dots,s_{n-1}}(dx_1,dx_2,\dots dx_{n-1}).
\end{multline*}
So
$$
\E(\varphi(Y_{s_n})|Y_{s_1},\dots,Y_{s_{n-1}})=
\int_\RR \varphi(y)\frac{H(s_n,y)}{H(s_{n-1},Y_{s_{n-1}})}g_{s_n|s_{n-1}}(dy|Y_{s_{n-1}})
$$
and
 $(Y_t)$ is Markov with transition probabilities \eqref{Exp-transitions}.

To verify \eqref{Inverse-trans}, note that
\begin{multline*}
\int  \varphi(y) \psi(x)G_{s,t}(dx, dy)=
\int_{\RR^2} \varphi(y) \psi(x)  H(t,y) g_{s,t}(dx,dy)
\\
=\int_{\RR} \varphi(y)
\left(\int_\RR \psi(x)  g_{s|t}(dx|y)\right)
H(t,y)g_{t}(dy).
\end{multline*}
Now the result follows from \eqref{Exp-Markov}.
\end{proof}

The following  result supplements Proposition \ref{L-Markov}.

\begin{lemma}\label{L-two-sided}
Fix $s<t<u$. The two-sided conditional laws of $Y_t$ given $Y_s,Y_u$ are the same as the two-sided conditional laws  $\xi_t|\xi_s,\xi_u$ of the L\'evy process $(\xi_t)$.
\end{lemma}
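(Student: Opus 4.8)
The plan is to read the joint law of $(Y_s,Y_t,Y_u)$ directly off the finite-dimensional distribution formula \eqref{Exp-Markov} and to exploit the fact that the only factor by which the law of $(Y_t)$ differs from the law of the underlying L\'evy process is $H(u,\cdot)$, a function that depends on the trajectory \emph{only} through its value at the largest time $u$. Since in the two-sided conditioning $Y_t\mid Y_s,Y_u$ the time $u$ is one of the conditioning times, this factor depends on the conditioning data alone; in particular it does not involve the intermediate value at time $t$, and so it must cancel.

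Concretely, I would first specialize \eqref{Exp-Markov} to the three times $s<t<u$, obtaining
\begin{equation*}
G_{s,t,u}(dx,dy,dz)=H(u,z)\,g_{s,t,u}(dx,dy,dz).
\end{equation*}
Next I would disintegrate the L\'evy finite-dimensional law against its outer coordinates, writing $g_{s,t,u}(dx,dy,dz)=g_{t|s,u}(dy|x,z)\,g_{s,u}(dx,dz)$, where $g_{t|s,u}$ denotes a regular version of the two-sided conditional law $\xi_t\mid\xi_s,\xi_u$. Integrating out $y$ then shows that $(Y_s,Y_u)$ has joint law $H(u,z)\,g_{s,u}(dx,dz)$, so that forming the ratio cancels the common weight $H(u,z)$ and leaves $g_{t|s,u}(dy|x,z)$ as the conditional law of $Y_t$ given $Y_s,Y_u$, which is exactly the assertion.

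To present this rigorously without presupposing densities, I would run the computation through bounded measurable test functions, in the same style as the proof of Proposition \ref{L-Markov}: for bounded $\varphi,\psi$ I would evaluate $\int \varphi(y)\psi(x,z)\,G_{s,t,u}(dx,dy,dz)$, use the disintegration above to carry out the $dy$ integration against $g_{t|s,u}$, and observe that the weight $H(u,z)$ rides along with the outer $(x,z)$ integration because it is free of $y$. There is essentially no analytic obstacle here; the only points requiring care are measure-theoretic bookkeeping, namely the existence and joint measurability of the regular two-sided conditional kernel $g_{t|s,u}$ and the fact that $H(u,z)$ defined in \eqref{H-exp} is finite and strictly positive for $g_{s,u}$-almost every $z$, so that the conditional law is well defined. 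Both follow routinely from the standing assumptions that $\kappa$ is finite on $(\theta_0,\theta_1)$ and that $h$ is a probability measure on $(\theta_0,\theta_1)$.
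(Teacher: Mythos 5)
Your proposal is correct and follows essentially the same route as the paper: both specialize \eqref{Exp-Markov} to the triple $(s,t,u)$ to write the joint law of $(Y_s,Y_t,Y_u)$ as $H(u,z)\,g_{s,t,u}(dx,dy,dz)$, and both conclude that the weight $H(u,z)$, depending only on the conditioning coordinates, cancels in the two-sided conditioning. The only difference is technical bookkeeping: the paper certifies equality of the conditional laws by testing against $e^{i\alpha y}$ (conditional characteristic functions), using that $\varphi(x,z)H(u,z)$ is $g_{s,t,u}$-integrable, which sidesteps the regular conditional kernel $g_{t|s,u}(dy|x,z)$ whose existence and measurability your disintegration must (and, on these Polish spaces, legitimately can) invoke.
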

 \begin{proof}
   From \eqref{Exp-transitions} and \eqref{Exp-Markov}, the joint law of $Y_s,Y_t,Y_u$ is given by
\begin{equation}\label{TO}
P_{0,s}(0,dx)P_{s,t}(x,dy)P_{t,u}(y,dz)=H(u,z) g_{s,t,u}(dx,dy,dz),
\end{equation}
where $g_{s,t,u}(dx,dy,dz)$ is the joint distribution of $(\xi_s,\xi_t,\xi_u)$.
Fix a bounded measurable function $\varphi(x,z)$.
If
$\E(e^{i\alpha \xi_t}|\xi_s,\xi_u)=f(\xi_s,\xi_u)$ is the conditional characteristic  function for the L\'evy process, then,
since    $\varphi(x,z) H(u,z)$ is integrable with
 respect to $g_{s,t,u}(dx,dy,dz)$,  we have
$$
\int  e^{i\alpha y}\varphi(x,z) H(u,z) g_{s,t,u}(dx,dy,dz)= \int f(x,z)  \varphi(x,z) H(u,z) g_{s,t,u}(dx,dy,dz).
$$
So by \eqref{TO}, we get
\begin{equation*}
  \int  e^{i\alpha y}\varphi(x,z) P_{0,s}(0,dx)P_{s,t}(x,dy)P_{t,u}(y,dz)
= \int  f(x,z) \varphi(x,z) P_{0,s}(0,dx)P_{s,t}(x,dy)P_{t,u}(y,dz),
\end{equation*}
which shows that the conditional characteristic function  $\E(e^{i\alpha Y_t}|Y_s,Y_u)$ is the same as the conditional characteristic function $\E(e^{i\alpha \xi_t}|\xi_s,\xi_u)$.
 \end{proof}

Next, we use martingale property to determine the class of randomizations that will be used for stitching.   
 \begin{proposition}\label{P1-Exp} Denote by $g(dx)$ the law of $\xi_1$. We assume that
 $\xi_1$ is integrable and that  $\kappa'(\theta)$ is $h(d\theta)$- integrable. Suppose that one of the following conditions holds:
 \begin{enumerate}
 \item The support of $g(dx)$ contains an open interval;
 \item  The support of $g(dx)$ is non-negative integers, and  $\theta_1<\infty$;
 \item  $\kappa(\theta)=e^{\theta}$, so that $g(dx)$ is the Poisson measure  on non-negative integers.\end{enumerate}
 If there are constants $p\in\RR$, $r>0$ such that $(Y_t+p)/(t+r)$ is a martingale with respect to its natural filtration, $t\geq 0$,
 then
\begin{equation}\label{Exponential-law}
h(d\theta)= C e^{p\theta-r \kappa(\theta)} 1_{(\theta_0,\theta_1)}(\theta)d\theta.
\end{equation}

\end{proposition}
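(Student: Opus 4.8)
The plan is to translate the martingale hypothesis into a statement about the posterior law of $\Theta$ given $Y_s$, and then invoke the Diaconis--Ylvisaker characterization of conjugate priors.

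First I would rewrite the hypothesis in terms of conditional expectations. Since $(Y_t)$ is Markov (Proposition \ref{L-Markov}), the martingale property of $(Y_t+p)/(t+r)$ is equivalent to
\[
\E(Y_t\mid Y_s)=\tfrac{t+r}{s+r}(Y_s+p)-p,\qquad s<t.
\]
On the other hand, conditioning additionally on $\Theta$ and using that, given $\Theta=\theta$, the process $(Y_u)$ is the L\'evy process $(X_u^{(\theta)})$ with $\E(X_t^{(\theta)}\mid X_s^{(\theta)}=x)=x+(t-s)\kappa'(\theta)$, the tower property gives
\[
\E(Y_t\mid Y_s)=Y_s+(t-s)\,\E\!\left(\kappa'(\Theta)\mid Y_s\right).
\]
Comparing the two displays and cancelling the common factor $t-s>0$ yields the key identity $\E\!\left(\kappa'(\Theta)\mid Y_s=x\right)=(x+p)/(s+r)$.

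Next I would recognize the left-hand side as a posterior mean. By \eqref{Exp-Markov} the law of $Y_s$ is $H(s,x)g_s(dx)$, so the posterior law of $\Theta$ given $Y_s=x$ is $e^{\theta x-s\kappa(\theta)}h(d\theta)/H(s,x)$; thus the identity above says that, for the natural exponential family generated by $g_s$ (whose mean parameter is $s\kappa'(\theta)$), the posterior expectation of the mean parameter is an affine function of the observation $x$. Specializing to $s=1$, so that the base measure is exactly $g$ and hypotheses (1)--(3) apply directly to its support, I would invoke the Diaconis--Ylvisaker theorem \cite{Diaconis:1979}: under each of these support conditions an affine posterior expectation of the mean parameter forces the prior to be conjugate, i.e. $h(d\theta)\propto e^{x_0\theta-n_0\kappa(\theta)}\,d\theta$ for some constants $x_0,n_0$. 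For such a prior the posterior after observing $x$ is $\propto e^{(x_0+x)\theta-(n_0+1)\kappa(\theta)}\,d\theta$, whose mean-parameter expectation is $(x_0+x)/(n_0+1)$; matching this with $(x+p)/(1+r)$ gives $n_0=r$ and $x_0=p$, which is exactly \eqref{Exponential-law}, with $C$ fixed by normalization.

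The conditioning computation of the first step is routine; the substance lies entirely in the second. The main obstacle I anticipate is the correct application of \cite{Diaconis:1979}: one must check that the affine relation holds for $x$ throughout the interior of the support of $g$, that the assumed $h$-integrability of $\kappa'$ guarantees the posterior mean is finite so that the characterization applies, and above all that the three listed support conditions are precisely the hypotheses---interval (absolutely continuous) support, bounded lattice support, and the exceptional Poisson case---under which the Diaconis--Ylvisaker converse holds, since for discrete families linearity alone need not pin down the prior without such restrictions.
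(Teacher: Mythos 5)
Your proposal is correct and follows essentially the same route as the paper's own proof: both reduce the martingale hypothesis, via conditioning on $\Theta$ and the tower property, to the posterior-mean identity $\E(\kappa'(\Theta)\mid Y_s)=(Y_s+p)/(s+r)$, and then invoke the Diaconis--Ylvisaker characterization of conjugate priors to conclude \eqref{Exponential-law}. The only differences are minor: your specialization to $s=1$ neatly replaces the paper's remark that the support conditions on $g$ are inherited by $g_s$ by infinite divisibility, and for case (iii) the paper cites \cite{Johnson:1957} rather than \cite{Diaconis:1979}, since the Poisson family (where $\theta_1=\infty$) lies outside the hypotheses of Theorems 3--4 of \cite{Diaconis:1979} -- precisely the delicacy you flagged as the main obstacle.
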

\begin{proof}  In fact, we only use the following simple consequence of the assumed martingale property: there is a pair $0<s<t$ such that
\begin{equation}\label{Exp-DY}
\E(Y_t-Y_s|Y_s)=(t-s)\frac{Y_s+p}{s+r}.
\end{equation}
Since the conditional law of $Y_t-Y_s|Y_s,\Theta$ is $e^{\Theta z - (t-s) \kappa(\Theta)}g_{t-s}(dz)$, we have
\begin{equation}\label{1.8}
\E(Y_t-Y_s|Y_s,\Theta)=(t-s)\kappa'(\Theta).
\end{equation}
 This implies that
\begin{equation}\label{Exp-YYY}
\E(\kappa'(\Theta)|Y_s)=\frac{Y_s+p}{s+r}.
\end{equation}
The joint law of $(Y_s,\Theta)$ is
\begin{equation}
  \label{niezanumarowany}
  e^{\theta y -s\kappa(\theta)}g_s(dy)h(d\theta),
\end{equation}
so this is a setting analyzed in \cite{Diaconis:1979}.
It is clear that for an infinitely divisible family of laws $\{g_t(dx):t>0\}$, the support of $g_s(dx)$ inherits the property of $g(dx)$ assumed in (i), (ii) or (iii).
The result follows from \cite[Theorem 3]{Diaconis:1979} in case (i), from \cite[Theorem 4]{Diaconis:1979} in case (ii), and from \cite{Johnson:1957} in case (iii).

\end{proof}

\begin{remark}
  The  martingale characterization of the law $h(d\theta)$ in case (iii) is related to \cite{nekrutkin2007martingale}.
\end{remark}

To establishes properties of the stitching construction  when the law of $\Theta$ is \eqref{Exponential-law}, we rely on additional technical assumptions on the L\'evy process $(\xi_t)$.
 \begin{assumption} We assume that $p\in\RR$ and $r>0$ are such that
for all $x$ in the support of $g(dx)$
\begin{equation}\label{Asume1}
\lim_{\theta\to \theta_0^+} e^{(p+x)\theta - r \kappa(\theta)}=\lim_{\theta\to \theta_1^-} e^{(p+x)\theta - r \kappa(\theta)}=0,
\end{equation}
\begin{equation}\label{Asume2}
\lim_{\theta\to \theta_0^+}\kappa'(\theta) e^{(p+x)\theta - r \kappa(\theta)}=\lim_{\theta\to \theta_1^-} \kappa'(\theta) e^{(p+x)\theta - r \kappa(\theta)}=0.
\end{equation}
\end{assumption}
(In Section \ref{Sect:Meinxer-Examples} we give  examples of processes $(\xi_t)$ that satisfy these assumptions.)

The following is a converse to Proposition \ref{P1-Exp} under additional assumption  \eqref{Asume1}.
\begin{proposition}\label{P-conv}Suppose a L\'evy process $(\xi_t)$ is integrable, and that  the law of $\Theta$ is \eqref{Exponential-law} with parameters $p\in\RR$ and $r>0$, such that \eqref{Asume1} holds, and that $\kappa'(\Theta)$ is integrable.
Then   process  $(Y_t+p)/(t+r)$ is a martingale in  the natural filtration.
\end{proposition}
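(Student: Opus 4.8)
The plan is to use the Markov property of $(Y_t)$ from Proposition \ref{L-Markov} to collapse the martingale property to the single identity \eqref{Exp-YYY}, and then to read that identity off the explicit conditional law of $\Theta$ given $Y_s$. First I would note that $M_t:=(Y_t+p)/(t+r)$ is integrable (since $\E|Y_t|=\E|X_t^{(\Theta)}|$ is controlled by integrability of $\xi_1$ and of $\kappa'(\Theta)$), and that, because $(Y_t)$ is Markov, $\E(Y_t-Y_s\mid\mathcal{F}_s)=\E(Y_t-Y_s\mid Y_s)$ for $0\le s<t$. Conditioning additionally on $\Theta$ makes each segment a genuine L\'evy process, so \eqref{1.8} gives $\E(Y_t-Y_s\mid Y_s,\Theta)=(t-s)\kappa'(\Theta)$, and the tower property yields $\E(Y_t-Y_s\mid Y_s)=(t-s)\,\E(\kappa'(\Theta)\mid Y_s)$. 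Comparing with the target relation $\E(Y_t-Y_s\mid\mathcal{F}_s)=(t-s)(Y_s+p)/(s+r)$, the whole statement reduces to establishing \eqref{Exp-YYY}, namely $\E(\kappa'(\Theta)\mid Y_s)=(Y_s+p)/(s+r)$ for every $s\ge0$, the case $s=0$ giving $\E\kappa'(\Theta)=p/r$.

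The engine is an integration by parts in $\theta$. From the joint law \eqref{niezanumarowany} together with the choice \eqref{Exponential-law} of $h$, the conditional law of $\Theta$ given $Y_s=y$ has density proportional to $e^{(p+y)\theta-(s+r)\kappa(\theta)}$ on $(\theta_0,\theta_1)$. Since
\[
\frac{d}{d\theta}\,e^{(p+y)\theta-(s+r)\kappa(\theta)}=\big((p+y)-(s+r)\kappa'(\theta)\big)\,e^{(p+y)\theta-(s+r)\kappa(\theta)},
\]
integrating over $(\theta_0,\theta_1)$ and dividing by the normalizing constant gives
\[
(p+y)-(s+r)\,\E(\kappa'(\Theta)\mid Y_s=y)=\Big[e^{(p+y)\theta-(s+r)\kappa(\theta)}\Big]_{\theta_0}^{\theta_1},
\]
so that \eqref{Exp-YYY} is exactly the assertion that the boundary terms on the right vanish.

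It remains to kill those boundary terms, and this is the step I expect to be the main obstacle. Assumption \eqref{Asume1} is tailored for it, but as stated it controls $e^{(p+x)\theta-r\kappa(\theta)}$ for $x$ in the support of $g=g_1$ and with exponent $r$, whereas the boundary term above carries exponent $s+r$ and a value $y$ in the support of $g_s$. The clean way to reconcile the two is to verify \eqref{Exp-YYY} weakly: testing against a bounded $\varphi$ and using Fubini (legitimate by the assumed integrability of $\kappa'(\Theta)$ together with \eqref{Asume1}), the boundary contribution factors as
\[
e^{p\theta-r\kappa(\theta)}\int\varphi(y)\,e^{y\theta-s\kappa(\theta)}\,g_s(dy)=e^{p\theta-r\kappa(\theta)}\;\E_{P_\theta}\!\big(\varphi(\xi_s)\big),
\]
whose second factor is bounded by $\|\varphi\|_\infty$; hence the boundary term disappears as soon as $e^{p\theta-r\kappa(\theta)}\to0$ at $\theta_0$ and $\theta_1$. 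This last decay follows from \eqref{Asume1}: comparing with $e^{(p+x)\theta-r\kappa(\theta)}$ for a fixed support point $x$, the extra factor $e^{-x\theta}$ is harmless at a finite endpoint and, at an infinite endpoint, is controlled because $p/r=\E\kappa'(\Theta)$ lies in the interior of the mean range. Carrying out this Fubini and boundary analysis carefully, and checking the mild integrability needed to justify the integration by parts and the interchange of integrals, is the only real work; everything else is the bookkeeping of the reduction above.
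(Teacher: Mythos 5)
Your proof follows the same skeleton as the paper's: reduce, via the Markov property and \eqref{1.8}, to the single identity \eqref{Exp-YYY}; verify it weakly against a bounded test function $\varphi$; apply Fubini; and recognize the $\theta$-integrand as the total derivative of $e^{(y+p)\theta-(s+r)\kappa(\theta)}$, so that everything hinges on vanishing boundary terms. Where you genuinely depart from the paper is the order of integration in that last step. The paper evaluates the $\theta$-integral pointwise in $y$, which requires $e^{(p+y)\theta-(r+s)\kappa(\theta)}\to 0$ at $\theta_0,\theta_1$ for $y$ in the support of $g_s$ --- an assertion with exponent $r+s$ and the support of $g_s$, rather than the exponent $r$ and the support of $g$ that \eqref{Asume1} literally provides; the paper passes over this mismatch in silence (harmless in the Meixner examples, where it can be checked directly, but not automatic in general, since the extra factor $e^{-s\kappa(\theta)}$ can blow up at an endpoint, as for the gamma process when $\theta\to-\infty$). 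You instead integrate over $y$ first, so the boundary term factors as $e^{p\theta-r\kappa(\theta)}\,\E_{P_\theta}(\varphi(\xi_s))$ with the second factor bounded by $\|\varphi\|_\infty$, and only decay of the randomization density itself is needed. This is a cleaner and more robust treatment of exactly the point the paper glosses over, and it frees the argument from any structure of the support of $g_s$.

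One step of yours is under-justified and, as written, circular: at an infinite endpoint you control $e^{p\theta-r\kappa(\theta)}$ by invoking $p/r=\E(\kappa'(\Theta))$ ``in the interior of the mean range.'' But identity \eqref{p/r} is obtained in the paper downstream, as a consequence of the martingale property you are in the middle of proving (equivalently, of the vanishing of these very boundary terms), so you may not assume it here. The decay you need is nevertheless true and provable from the standing hypotheses: $\theta\mapsto p\theta-r\kappa(\theta)$ is concave (since $\kappa$ is convex and $r>0$), and since $h(d\theta)=Ce^{p\theta-r\kappa(\theta)}\,d\theta$ is assumed to be a probability measure, $e^{p\theta-r\kappa(\theta)}$ is integrable near the endpoint; a concave function whose exponential is integrable near an infinite endpoint must tend to $-\infty$ there, for otherwise it would be eventually bounded below, contradicting integrability. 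At a finite endpoint your comparison argument is already correct: take \eqref{Asume1} for one fixed support point $x$ and multiply by $e^{-x\theta}$, which stays bounded as $\theta$ approaches the finite endpoint. With that repair your proof is complete.
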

\begin{proof}
 By Markov property, see Proposition \ref{L-Markov},
we want to show that
$$
(s+r)\E(Y_t-Y_s|Y_s)=(t-s)(Y_s+p).
$$
Since \eqref{1.8} holds, using \eqref{Exponential-law} and \eqref{niezanumarowany} we see that
it suffices to verify that for a bounded measurable $\varphi$, we have
$$
\iint \varphi(y) (y+p-(s+r)\kappa'(\theta))e^{(y+p)\theta - (s+r)\kappa(\theta)} d\theta g_s(dy)=0.
$$
Since $\xi_s, k'(\Theta)$ are integrable by assumption, we can switch to iterated integrals
\begin{equation}\label{E(K')}
\int_\RR \int_{\theta_0}^{\theta_1} (y+p-(s+r)\kappa'(\theta))e^{(y+p)\theta - (s+r)\kappa(\theta) }d\theta \varphi(y) g_s(dy).
\end{equation}
Under  \eqref{Asume1}, the inner integral is
$$\lim_{(u,v)\to(\theta_0,\theta_1)}
e^{(p+y) v  -(r+s)\kappa(v)}-e^{(p+y) u  -(r+s)\kappa(u)}=0-0.$$
Once this holds, we get \eqref{Exp-YYY}, and then \eqref{Exp-DY} which gives martingale as $(Y_t)$ is Markov.
\end{proof}
In particular, since  %
a martingale must have constant mean, we get $\E(Y_t)=t p/r$. So if \eqref{Asume1} holds, then from  \eqref{1.8} we get
\begin{equation}\label{p/r}
\E(\kappa'(\Theta))=\int_{\theta_0}^{\theta_1} \kappa'(\theta)h(d\theta)=p/r.
\end{equation}

\section{Harness property}\label{Sect:Harness}
The following definition is a Markov version of the well-known concept of a harness, see
\cite{Hammersley,Mansuy-Yor-05}.
\begin{definition}\label{Def_0} Let $\TT=(T_0,T_1)\subset(0,\infty)$. A Markov process $(Z_t)$ is a harness on $\TT$, if  for every $s,t,u\in \TT$ with $s<t<u$,
\begin{equation}\label{LR}
\E(Z_t|Z_s,Z_u)=\frac{u-t}{u-s}Z_s+\frac{t-s}{u-s}Z_u.
\end{equation}

\end{definition}

We now show that the randomization laws identified in Proposition \ref{P1-Exp} yield harness.
  \begin{proposition}\label{P2.3} Suppose a L\'evy process $(\xi_t)$ is integrable and that  the law of $\Theta$ is \eqref{Exponential-law} with some parameters $r>0$, $p\in\RR$, such that \eqref{Asume1} holds and $\kappa'(\Theta)$ is integrable.
Then \eqref{Exp-Z}  defines a harness on $(0,\infty)$.
\end{proposition}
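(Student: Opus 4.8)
The plan is to use that $(Z_t)$ is Markov, so that verifying the harness identity \eqref{LR} reduces to computing the two-sided regression $\E(Z_t\mid Z_s,Z_u)$ for $s<t<u$; all these variables are integrable since $\E(Y_t)=tp/r$ by \eqref{p/r}, whence $\E|Z_t|<\infty$. The organizing device is conditioning on $\Theta$: given $\Theta=\theta$, the first branch is the deterministic time change $t\mapsto rt/(1-t)$ and rescaling of the L\'evy process $X^{(\theta)}$, the third branch is the same transformation of an independent copy, and the two are joined at $Z_1=\tfrac1v(\kappa'(\theta)-p/r)$, which is \emph{constant} given $\Theta$; in particular, given $\Theta$ the two branches are conditionally independent. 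I would split $s<t<u$ by the position of the point $1$ and use the time-inversion symmetry noted after \eqref{Exp-Z} (which exchanges the two branches) to halve the bookkeeping.

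For $s<t<u<1$ (and, symmetrically, $1<s<t<u$), set $\sigma=rs/(1-s)$, $\tau=rt/(1-t)$, $\upsilon=ru/(1-u)$. By Lemma \ref{L-two-sided} the two-sided law of $Y_\tau$ given $Y_\sigma,Y_\upsilon$ coincides with that of $\xi_\tau$ given $\xi_\sigma,\xi_\upsilon$, and an integrable L\'evy process is itself a harness: since $B=\xi_\tau-\xi_\sigma$ and $C=\xi_\upsilon-\xi_\tau$ are independent of each other and of $\xi_\sigma$, a one-line characteristic-function computation gives $\E(B\mid B+C)=\tfrac{\tau-\sigma}{\upsilon-\sigma}(B+C)$, hence $\E(\xi_\tau\mid\xi_\sigma,\xi_\upsilon)=\tfrac{\upsilon-\tau}{\upsilon-\sigma}\xi_\sigma+\tfrac{\tau-\sigma}{\upsilon-\sigma}\xi_\upsilon$. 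Substituting $Y_\sigma=\tfrac{rv}{1-s}Z_s+\tfrac{sp}{1-s}$ and the analogous formulas for $Y_\tau,Y_\upsilon$, the elementary identities $\upsilon-\tau=r(u-t)/[(1-u)(1-t)]$, $\upsilon-\sigma=r(u-s)/[(1-u)(1-s)]$, $\tau-\sigma=r(t-s)/[(1-t)(1-s)]$ convert the $Y$-weights into $\tfrac{u-t}{u-s},\tfrac{t-s}{u-s}$, the $p$-dependent constants cancel, and \eqref{LR} for $Z$ follows on each open branch.

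The crux is the boundary value: for $s<1<u$ I must show $\E(Z_1\mid Z_s,Z_u)=\tfrac{u-1}{u-s}Z_s+\tfrac{1-s}{u-s}Z_u$. Writing $a=Y_\sigma$, $b=Y'_{\rho_u}$ with $\rho_u=r/(u-1)$, conditional independence given $\Theta$, the law \eqref{Exponential-law} of $\Theta$, and the densities $e^{\theta a-\sigma\kappa(\theta)}g_\sigma(da)$, $e^{\theta b-\rho_u\kappa(\theta)}g_{\rho_u}(db)$ from \eqref{P_theta} show that the conditional law of $\Theta$ given $(a,b)$ is proportional to $e^{(p+a+b)\theta-R\kappa(\theta)}$, where $R=r+\sigma+\rho_u=r(u-s)/[(1-s)(u-1)]$. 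Integrating $\tfrac{d}{d\theta}e^{(p+a+b)\theta-R\kappa(\theta)}=[(p+a+b)-R\kappa'(\theta)]\,e^{(p+a+b)\theta-R\kappa(\theta)}$ and discarding the boundary terms---which vanish by exactly the argument used for Proposition \ref{P-conv}, i.e.\ Assumption \eqref{Asume1}---yields $\E(\kappa'(\Theta)\mid Z_s,Z_u)=(p+a+b)/R$. Expressing $a,b$ through $Z_s,Z_u$ and simplifying with the explicit $R$ produces precisely the asserted linear combination. This is the main obstacle: it is where the exponential form \eqref{Exponential-law} of the randomization and Assumption \eqref{Asume1} are indispensable, and where the weights reorganize into the harness coefficients.

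Finally, the straddling cases reduce to the two facts above. If $t<1<u$, then since $Z_u$ (built from the second copy) is conditionally independent of $(Z_s,Z_t)$ given $\Theta$, one has $\E(Z_t\mid Z_s,Z_u,\Theta)=\E(Z_t\mid Z_s,\Theta)=\tfrac{1-t}{1-s}Z_s+\tfrac{t-s}{1-s}Z_1$, the last equality being the elementary conditional mean $\E(Y_\tau\mid Y_\sigma,\Theta)=Y_\sigma+(\tau-\sigma)\kappa'(\Theta)$ rewritten through the time change and the relation $Z_1=\tfrac1v(\kappa'(\Theta)-p/r)$. Taking $\E(\,\cdot\mid Z_s,Z_u)$ and inserting the boundary identity for $\E(Z_1\mid Z_s,Z_u)$ recovers \eqref{LR} after a short simplification. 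The case $s<1<t$ is the time-inverted mirror and follows from the backward conditional mean on the third branch together with the same boundary identity, while the degenerate cases $t=1$, $s=1$, $u=1$ are immediate because $Z_1$ is $\Theta$-measurable. Combining the branch-interior regression, the boundary identity, and these reductions establishes \eqref{LR} for all $s<t<u$ in $(0,\infty)$.
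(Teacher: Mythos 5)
Your proposal is correct and takes essentially the same route as the paper's proof: the interior cases via Lemma \ref{L-two-sided} together with the harness property of integrable L\'evy processes, the crucial boundary identity $\E\left(\kappa'(\Theta)\mid Y_{s'},Y'_{u'}\right)=(Y_{s'}+Y'_{u'}+p)/(s'+u'+r)$ obtained by integrating an exact $\theta$-derivative and killing the endpoint terms with \eqref{Asume1}, and the straddling and degenerate cases by conditioning reductions. The only cosmetic differences are that you prove the L\'evy harness fact by a characteristic-function computation where the paper cites \cite{Jacod-Protter-88}, and you reduce the mixed cases via $\Theta$-conditional independence where the paper invokes the Markov property of $(Z_t)$, which is itself derived from that same conditional independence.
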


\begin{proof}
We only need to verify \eqref{LR} for $s<t<u<1$ and for $s<t=1<u$.
Indeed, if we have these two cases, then the remaining cases are handled as follows:  the  case $1<s<t<u$ is the time-inversion of $0<s<t<u<1$.
In the case $0<s<t<1<u$  by Markov property
$\E(Z_t|Z_s,Z_u)=\E(E(Z_t|Z_s,Z_1)|Z_s,Z_u)=\frac{1-t}{1-s}Z_s+\frac{t-s}{1-s} E(Z_1|Z_s,Z_u)$.  The other case $0<s<1<t<u$ is handled similarly (or by time inversion).
Finally, the cases  $1=s<t<u$  and $s<t<u=1$ are the limits of cases $0<s<1<t<u$ and $0<s<t<1<u$, respectively.

To prove  \eqref{LR} for or $s<t<u<1$, denote %
\begin{equation}\label{stu}
s'= \frac{rs}{1-s},\;  t'= \frac{rt}{1-t},\; u'=\frac{ ru}{1-u}.
\end{equation}

Since it is known that all integrable L\'evy processes are harnesses, see e.g. \cite[(2.8)]{Jacod-Protter-88}, by Lemma \ref{L-two-sided} we get
\begin{equation}\label{Z-LR}
\E(Z_t|Z_s,Z_u)=\frac{1-t}{rv}\left(\frac{u'-t'}{u'-s'}Y_{s'}+\frac{t'-s'}{u'-s'}Y_{u'}\right).
\end{equation}
A calculation shows that
$$
\frac{u'-t'}{u'-s'}Y_{s'}+\frac{t'-s'}{u'-s'}Y_{u'}=\frac{p t }{1-t}+rv\frac{(u-t) Z_s+(t-s)  Z_u}{(1-t) (u-s)}
,
$$
so the right hand side of \eqref{Z-LR} simplifies to the right hand side of \eqref{LR}. (This part of the proof does not rely on \eqref{Exponential-law}.)

To prove  \eqref{LR} for or $s<t=1<u$,   denote
\begin{equation}\label{rprmt}
s'=\frac{rs}{1-s} ,\; u'=\frac{ r}{u-1}.
\end{equation}
The joint distribution  $Z_s,Z_1,Z_u$ (see \eqref{Exp-Z}) is determined from the joint distribution of $Y_{s'},\Theta,Y'_{u'}$ which by conditional independence is given by
\begin{equation}\label{mumu}
\mu(dx,d\theta,dz)=C e^{(x+z+p)\theta-(r+s'+u')\kappa(\theta)}d\theta g_{s'}(dx)g_{u'}(dz).
\end{equation}
To verify harness property, we show that
\begin{equation}\label{mu-phi0}
\E\left(\kappa'(\Theta)|Y_{s'},Y'_{u'}\right)=\frac{Y_{s'}+Y'_{u'}+p}{u'+s'+r}.
\end{equation}
 Equivalently, we show that for any bounded measurable function $\varphi(x,z)$,
\begin{equation}\label{mu-phi}
\iiint \varphi(x,z)( x+z+p-(u'+s'+r)\kappa'(\theta)) \mu(dx,d\theta,dz)=0.
\end{equation}
Since we assume integrability, to prove \eqref{mu-phi} we  rewrite the triple integral into the iterated integrals, with the inner integral with respect to $\theta$. The inner integral is
\begin{multline*}
\int_{\theta_0}^{\theta_1}( x+z+p-(u'+s'+r)\kappa'(\theta)) e^{ (x+z+p)\theta-(u'+s'+r)\kappa(\theta)}d\theta
= e^{ (x+z+p)\theta-(u'+s'+r)\kappa(\theta)}\Big|_{\theta=\theta_0}^{\theta=\theta_1}=0.
\end{multline*}
(Here we use \eqref{Asume1}, noting that $x+z$ is in the support of $g(dx)$ by infinite divisibility.) This shows that $E(Z_1|Z_s,Z_u)$ is a linear function of $Z_s,Z_u$.
A calculation shows that
\begin{equation}\label{Y2ZZ}
\frac{Y_{s'}+Y'_{u'}+p}{u'+s'+r}=\frac{p}{r}+v\frac{Z_u-Z_s}{u-s}+v\frac{uZ_s-sZ_u}{u-s},
\end{equation}
so \eqref{LR} follows.
\end{proof}

\section{Randomizations of L\'evy-Meixner processes}\label{Sect:Meinxer-Examples}

 Recall that the variance function of a \NEF\ (\cite{Jorgensen:1997,letac1992lectures}) is a function $V$ such that
\begin{equation}\label{Var-Funct}
\kappa''(\theta)=V(\kappa'(\theta)).
\end{equation}
It is known (\cite[Theorem 2.11]{Jorgensen:1997}) that $V$ determines $\kappa$ uniquely.  In this section we consider   L\'evy processes $(\xi_t)$ in the Meixner class \cite{schoutens2000stochastic}. Each such process  generates a \NEF\  with a quadratic variance function
\begin{equation}\label{QV}
V(m)=am^2+bm+c,\; a\geq 0,
\end{equation}
 see \cite{Morris:1982}.
These processes are square-integrable, and by checking each case we verify that they satisfy (\ref{Asume1}-\ref{Asume2}), provided $r>a$. (Restrictions on $p$ vary per case.)

We now list each of the five cases  of L\'evy processes corresponding to exponential families with quadratic variance function $V$.

\begin{example}\label{P1-Wiener}
Let $(\xi_t)_{t\ge 0}$ be the Wiener process. Then $\kappa(\theta)=\theta^2/2$ and $\kappa'(\theta)=\theta$. The variance function \eqref{Var-Funct} is  $V(m)=1$.

For $\theta\in(\theta_0,\theta_1)=(-\infty,\infty)$, process $(X_t^{(\theta)})$ has univariate densities $\sim \exp(\frac{(x-t\theta)^2}{2t})$, so it has a very simple representation $X_t^{(\theta)}=\xi_t-t\theta$.

Next, consider random $\Theta$, and (Markov) process  $(Y_t)_{t\ge 0}$.
  If $\Theta$ is integrable and there are constants $p\in\RR$, $r>0$ such that $(Y_t+p)/(t+r)$ is a martingale with respect to natural filtration, then formula \eqref{Exponential-law} implies that
  $\Theta$ is normal with mean $p/r$ and variance $1/r$.
 Then the moments are:
\begin{equation*}
\E(\kappa'(\Theta))=\tfrac{p}{r},\; \Var(\kappa'(\Theta))=\tfrac{1}{r}.
\end{equation*}
It is easy to see that \eqref{Asume1},  \eqref{Asume2} hold for $p\in\RR$, $r>0$.
\end{example}

The following example is closely related to \cite{Johnson:1957} and to \cite[Proposition 1]{nekrutkin2007martingale}.
\begin{example}\label{P1-Poiss}
Let $(\xi_t)_{t\geq 0}$ be the Poisson process with parameter $\la=1$. Then $\kappa(\theta)=e^{\theta}-1$, $\kappa'(\theta)=e^\theta$. The variance function \eqref{Var-Funct} is  $V(m)=m$.

For $\theta\in(\theta_0,\theta_1)=(-\infty,\infty)$, process $(X^{(\theta)}_t)$ is a Poisson process with parameter $\la=e^\theta$, so it has a very simple representation $X_t=\xi_{te^\theta}$.

Next, consider random $\Theta$, and (Markov) process  $(Y_t)_{t\ge 0}$.
If $e^\Theta$ is integrable and there are constants $p>0$, $r>0$ such that $(Y_t+p)/(t+r)$ is a martingale with respect to its natural filtration, $t\geq 0$, then the law of $\Theta$ is
\begin{equation}\label{HP}
h(d\theta)=C \exp(p \theta - r (e^\theta-1)) d\theta.
\end{equation}

A more natural randomization is $\Lambda=\kappa'(\Theta)=\exp(\Theta)$,
 then    $\Lambda$ has gamma $G(p,r)$ law,
\begin{equation*}\label{Poiss-La}
h(d\la)=\frac{r^p}{\Gamma(p)}\la^{p-1}e^{-r\la}\1_{(0,\infty)}(\la)d\la\,.
\end{equation*}
Under \eqref{HP},
\begin{equation*}
\E(\kappa'(\Theta))=\tfrac{p}{r},\; \Var(\kappa'(\Theta))=\tfrac{p}{r^2}.
\end{equation*}
It is easy to check that  \eqref{Asume1},  \eqref{Asume2} hold for $p>0$, $r>0$.
\end{example}

\begin{example}\label{P1-gamma}
Let $(\xi_t)_{t\ge 0}$ be the standard gamma process, that is a L\'evy process for which $\xi_1$ is exponential with mean 1. Then $\kappa(\theta)=-\log(1-\theta)$, $\kappa'(\theta)=1/(1-\theta)$.
The variance function \eqref{Var-Funct} is  $V(m)=m^2$.

For $\theta\in(\theta_0,\theta_1)=(-\infty,1)$, random variable $X_t^{(\theta)}$ has the gamma law with density proportional to $ x^{t-1}e^{-(1-\theta)x}$ on $(0,\infty)$ so the process has simple representation $X_t=(1-\theta)\xi_t$.

Let $\Theta\in(-\infty,1)$ be a random variable and consider (Markov) process $(Y_t)$.
 Suppose that  $\kappa'(\Theta)=1/(1-\Theta)$ is integrable and  there are constants
$p,r>0$ such that $(Y_t+p)/(t+r)$ is a martingale with respect to natural filtration, $t\geq0$. Then
from \eqref{Exponential-law}, we see that the law of $\Theta$ is
\begin{equation}\label{HG}
C (1-\theta)^r e^{-p(1-\theta)}1_{(-\infty,1)}(\theta)d\theta.
\end{equation}

So  $1-\Theta$ has   gamma $G(r+1,p)$ law i.e., and a more natural parametrization is $W=\kappa'(\Theta)=1/(1-\Theta)$ has density
\begin{equation*}\label{gamma-W}
h(d w)=   \frac{p^{r+1}}{\Gamma(r+1)}\frac{\exp(-p/w)}{w^{r+2}}\1_{(0,\infty)}(w) dw.
\end{equation*}
Under \eqref{HG}, if $r>1$ then
\begin{equation*}
\E(\kappa'(\Theta))=\tfrac{p}{r},\; \Var(\kappa'(\Theta))=\tfrac{p^2}{r^2(r-1)}.
\end{equation*}
It is easy to see that \eqref{Asume1},  \eqref{Asume2} hold for $p>0$, $r>1$.
\end{example}

\begin{example}\label{P1-Pasc}
Let $(\xi_t)_{t\ge 0}$ be the negative binomial process, that is a L\'evy process for which $\xi_t$ is Negative Binomial $NB(q,t)$, i.e.
\begin{equation*}
  \label{NB-def}P(\xi_t=k)=\frac{\Gamma(t+k)}{\Gamma(t)k!}(1-q)^tq^k, \; k=0,1,\dots
\end{equation*}
 (Here $t>0$ and $0<q<1$.)
Then $\kappa(\theta)=\log (1-q)-\log (1-q e^\theta)$ and $\kappa'(\theta)=\frac{qe^\theta}{1-qe^\theta}$.
The variance function \eqref{Var-Funct} is  $V(m)=m^2+m$.

For $\theta\in(\theta_0,\theta_1)=(-\infty, -\log q)$  the natural exponential L\'evy process $(X_t^{(\theta)})$ is negative binomial with parameter $q$ replaced by $q e^\theta$.

Let $\Theta$ be a random variable with values in $(\theta_0,\theta_1)$, and let $(Y_t)$ be the corresponding Markov process.
  Suppose that $\kappa'(\Theta)$ is integrable, and there are constants
$p>0$, $r>0$ such that $(Y_t+p)/(t+r)$ is a martingale in its natural filtration for $t>0$.
Then
\begin{equation}\label{HNB}
h(d\theta)=c e^{\theta p}(1-qe^\theta)^r 1_{(-\infty, -\log q)}(\theta)d\theta.
\end{equation}
A more natural parametrization is
$\Pi=qe^\Theta$,
which has beta $B_I(a,b)$ law,
\begin{equation*}\label{Pascal-Pi}
h(dx)=\frac{\Gamma(a+b)}{\Gamma(a)\Gamma(b)}  x^{a-1}(1-x)^{b-1}  \1_{(0,1)}(x)dx,
\end{equation*}
with parameters $a=p$ and $b=r+1$.
Under \eqref{HNB}, if $r>1$ then
\begin{equation*}
\E(\kappa'(\Theta))=\tfrac{p}{r},\; \Var(\kappa'(\Theta))=\tfrac{p(p+r)}{r^2(r-1)}.
\end{equation*}
It is easy to see that \eqref{Asume1},  \eqref{Asume2} hold for $p>0$, $r>1$.
\end{example}

\begin{remark} When  $\Theta$  has  density \eqref{HNB}     then $(Y_t)$ is known as the generalized Waring process  \cite{burrell1988modelling,burrell1988predictive,zografi2001generalized}.
\end{remark}
\begin{example}\label{P1-Meix}
Let $(\xi_t)_{t\ge 0}$ be the (symmetric) hyperbolic secant process, with the univariate distributions given by  \eqref{HM} with $\theta=0$.
For $\theta\in(\theta_0,\theta_1)=(-\pi,\pi)$, we get $\kappa(\theta)=-\log \cos^2(\theta/2)$ and $\kappa'(\theta)=\tan(\theta/2)$.
The variance function \eqref{Var-Funct} is  $V(m)=(1+m^2)/2$.

The corresponding  L\'evy process $(X_t^{(\theta)})$  has the marginal density of $X_t^{(\theta)}$ given by
\begin{equation}\label{HM}
f(x;t,\theta) = \frac{(2\cos(\tfrac \theta 2))^{2t}}{2\pi \Gamma(2 t)}  |\Gamma(t+ ix)|^2 e^{\theta x},\; t>0.
\end{equation}

Let $\Theta$ be a random variable with values in $(-\pi,\pi)$ and let
$(Y_t)_{t\ge 0}$ be the corresponding Markov process.
 (This process  was studied in \cite{grigelionis1999processes}.)

 Suppose that $\tan(\Theta/2)$ is integrable and that  there are constants
$p,r$ such that $(Y_t+p)/(t+r)$ is a martingale with respect to natural filtration, $t\geq 0$. Then  $\Theta$ has the following distribution:
 \begin{equation}
    \label{Hip-alpha}
  h(d\theta)=C   \left( \cos (\tfrac\theta2)\right)^{2r}e^{p \theta}\1_{(-\pi,\pi)}(\theta)d\theta,
  \end{equation}
     where   $C=C(p,r)$ is the normalizing constant that does not depend on $\theta$.

Under \eqref{Hip-alpha}, if  $r>1/2$ then
\begin{equation*}
\E(\kappa'(\Theta))=\tfrac{p}{r},\; \Var(\kappa'(\Theta))=\frac{p^2+r^2}{r^2(2r-1)}.
\end{equation*}
(Here we used \eqref{Amazing} to compute the variance.)
It is easy to see that \eqref{Asume1},  \eqref{Asume2} hold for $p\in\RR$, $r>1/2$.
\end{example}
\begin{remark}\label{R.5.1}
It is known that $\E(X_t^{(\theta)})=t \tan \left(\tfrac{\theta}{2}\right)$ and $\Var(X_t^{(\theta)})=\frac{t}{2} \sec ^2\left(\frac{\theta}{2}\right)$.  (To see this, differentiate \eqref{HM} with respect to $\theta$ and integrate the answer with respect to $x$.)
\end{remark}

\section{Stitching L\'evy-Meixner processes}
\label{Sect:SC} 
In this section we show that stitching constructions work nicely for L\'evy-Meixner processes. Then the resulting processes are quadratic harnesses that we call bi-Meixner processes.
We first recall a Markov version of the terminology based on \cite{Bryc-Matysiak-Wesolowski-04}.

\begin{definition}\label{Def_1} Let $\TT=(T_0,T_1)\subset(0,\infty)$.
A square-integrable Markov process $\Z=(Z_t)_{t\in \TT}$ is a quadratic harness on $\TT$ if  it fulfills the following requirements:
 \begin{enumerate}
 \item $\Z$ is a harness  on $\TT$ with  the first two moments given by %
  \begin{equation}\label{EQ:cov}
\E(Z_t)=0,\: \E(Z_sZ_t)=\min\{s,t\}, \;0\leq s\leq t<\infty.
\end{equation}
\item
 there exist numerical constants
 $\ceta,\ctheta\in\RR$ $\sigma,\tau\geq 0$ and $ \gamma\leq1+2\sqrt{\sigma\tau}$ such that
for all $s<t<u$,

\begin{multline}\label{EQ:q-Var}
\V [Z_t|Z_s,Z_u]
= F_{t,s,u}\left( 1+\ceta \frac{uZ_s-sZ_u}{u-s} +\ctheta\frac{Z_u-Z_s}{u-s}\right. \\
+ \sigma
\frac{(uZ_s-sZ_u)^2}{(u-s)^2} \left.
+\tau\frac{(Z_u-Z_s)^2}{(u-s)^2}
-(1-\gamma)\frac{(Z_u-Z_s)(uZ_s-sZ_u)}{(u-s)^2} \right),
\end{multline}
where
\begin{equation*}\label{Ftsu}
F_{t,s,u}=\frac{(u-t)(t-s)}{u(1+s\sigma)+\tau-s\gamma}.
\end{equation*}
\end{enumerate}
\end{definition}
After centering and standardization, Meixner processes from Section \ref{Sect:Meinxer-Examples} are quadratic harnesses with parameters  $\gamma=1$, $\ceta=\sigma=0$. According to \cite{Wesolowski93}, they are uniquely determined by the remaining two parameters $\beta\in\RR,\tau\geq 0$:  the Wiener process  is a quadratic harness with  $\ctheta=\tau=0$; the (centered and scaled) Poisson process  is a quadratic harness with   $\tau=0$, $\ctheta\ne 0$, the (centered) negative binomial process is a quadratic harness with   $\ctheta^2>4\tau>0$ (elliptic case),  the (centered) gamma process  is a quadratic harness with   $\ctheta^2=4\tau>0$ (parabolic case), and the (centered) hyperbolic secant  process  is a quadratic harness with   $\ctheta^2<4\tau$ (hyperbolic case).

In this section we show that stitching of such processes results in bi-Meixner processes,  defined as quadratic harnesses with parameters such that
$0\leq\sigma\tau<1$, $\gamma=1+2\sqrt{\sigma\tau}$, and $\ceta\sqrt{\tau}=\ctheta\sqrt{\sigma}$. Examples of such processes are the bi-Poisson process \cite[Proposition 4.1]{Bryc-Wesolowski-05} and the bi-Pascal process \cite[Proposition 5.1]{Maja:2009}.
In \cite[Proposition 2.7]{Bryc-Wesolowski-09} we established   that only bi-Meixner processes   may result from  stitching together quadratic harnesses with parameters $\sigma=\ceta=0$.

The following result confirms the latter and gives explicit stitching construction of all bi-Meixner processes.


\begin{theorem}\label{T-main}
Fix a L\'evy-Meixner process $(\xi_t)$ corresponding to variance function \eqref{QV}.
Let $\Theta\in(\theta_0,\theta_1)$ be a non-degenerate random variable with the law $h(d\theta)$  given by \eqref{Exponential-law} with parameters $r>a$, $p\in\RR$ such that $\kappa'(\Theta)$ is square integrable, and such that   \eqref{Asume1}, \eqref{Asume2} hold,   as listed in the third column of Table \ref{Table2}.
With %
$v^2=\Var(\kappa'(\Theta))>0$, consider process $(Z_t)$
defined in \eqref{Exp-Z}.

Then $(Z_t)$ is a quadratic harness on $(0,\infty)$.
With $m=p/r$, the parameters are
$$\alpha=\beta=\frac{V'(m)}{\sqrt{V(m)(r-a)}}, \; \sigma=\tau=\frac{a}{r-a}, \; \gamma=1+2\sqrt{\sigma\tau}=\frac{r+a}{r-a},$$   see also Table \ref{Table2}.
\end{theorem}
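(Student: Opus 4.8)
The plan is to verify the two requirements of Definition \ref{Def_1} separately, the harness property itself being already supplied by Proposition \ref{P2.3}. Since $\Theta$ is non-degenerate and $\kappa''=V>0$, the map $\kappa'$ is strictly increasing and $v^2=\Var(\kappa'(\Theta))>0$, so the standardization in \eqref{Exp-Z} is legitimate. The computational backbone is a single identity: writing $m=p/r$, integration by parts against the density \eqref{Exponential-law} with boundary terms killed by \eqref{Asume2} gives $\E(\kappa''(\Theta))=r\,\Var(\kappa'(\Theta))$; combined with $\kappa''=V$ and $\E(\kappa'(\Theta))=m$ from \eqref{p/r} this yields the key relation
\begin{equation*}
v^2=\frac{V(m)}{r-a},
\end{equation*}
which forces $V(m)>0$ and explains the hypothesis $r>a$ (one also checks it against the case-by-case variances in Section \ref{Sect:Meinxer-Examples}). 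The same computation applied to the tilted law $\propto e^{\tilde p\theta-\tilde r\kappa(\theta)}$ (again using \eqref{Asume1}, \eqref{Asume2}) shows that the variance of $\kappa'(\Theta)$ under this law equals $V(\tilde m)/(\tilde r-a)$ with $\tilde m=\tilde p/\tilde r$; this conditional analogue will drive the boundary case below.

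For the first two moments I would record $\E(Y_\tau)=\tau m$ and, using $\E(\kappa''(\Theta))=rv^2$ together with the conditional mean and variance $\tau\kappa'(\Theta),\tau\kappa''(\Theta)$ of $Y_\tau$ given $\Theta$, the variance $\Var(Y_\tau)=v^2\tau(r+\tau)$. Substituting the reparametrizations \eqref{stu} into \eqref{Exp-Z} then gives $\E(Z_t)=0$ and $\E(Z_t^2)=t$ by direct computation in each regime $t<1$, $t=1$, $t>1$. A short computation using the martingale property of $(Y_t+p)/(t+r)$ gives $\E(Z_sZ_t)=s$ for $s<t<1$, and the remaining covariances follow from this base computation (and its time-inverse for $1<s<t$), from $\E(Z_t^2)=t$, and from one application of the harness identity \eqref{LR}, which propagates the covariance across $t=1$; this establishes \eqref{EQ:cov}.

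The heart of the proof is the conditional variance \eqref{EQ:q-Var}, which I would verify in two generating configurations and then propagate. For $s<t<u<1$, Lemma \ref{L-two-sided} replaces the two-sided law of $Y$ by that of the L\'evy process $(\xi_t)$, so I first compute $\V[\xi_t|\xi_s,\xi_u]$. Writing $A=\xi_t-\xi_s$, $B=\xi_u-\xi_t$, the regression $\E[A\mid A+B]=\tfrac{t-s}{u-s}(A+B)$ is linear (integrable L\'evy processes are harnesses), while the quadratic-variance-function hypothesis makes $\V[A\mid A+B=w]$ a quadratic polynomial in $w$. Its three coefficients are pinned down by evaluating $\E_\theta[\V[A\mid A+B]]=\tfrac{(t-s)(u-t)}{u-s}\kappa''(\theta)$ across the tilted family and matching powers of $\kappa'(\theta)$, giving the compact formula
\begin{equation*}
\V[\xi_t|\xi_s,\xi_u]=\frac{(t-s)(u-t)}{(u-s)+a}\,V\!\left(\frac{\xi_u-\xi_s}{u-s}\right).
\end{equation*}
Transporting this through \eqref{stu} and the affine map defining $Z$ (a deterministic rescaling that only multiplies the conditional variance by $((1-t)/(rv))^2$) and simplifying, I obtain $\V[Z_t|Z_s,Z_u]=F_{t,s,u}\,V(\tilde m)/V(m)$ with $\tilde m=m+v\,\frac{(u-1)Z_s+(1-s)Z_u}{u-s}$.

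For $s<t=1<u$ I use instead $Z_1=(\kappa'(\Theta)-m)/v$, so $\V[Z_1|Z_s,Z_u]=v^{-2}\V[\kappa'(\Theta)\mid Y_{s'},Y'_{u'}]$. By \eqref{mumu} the conditional law of $\Theta$ given $Y_{s'}=x,\,Y'_{u'}=z$ is of exponential form \eqref{Exponential-law} with $\tilde p=x+z+p$ and $\tilde r=r+s'+u'$, so the conditional identity of the first paragraph gives $\V[\kappa'(\Theta)\mid Y_{s'},Y'_{u'}]=V(\tilde m)/(\tilde r-a)$ with $\tilde m$ the conditional mean \eqref{mu-phi0}; using \eqref{Y2ZZ} and $v^2=V(m)/(r-a)$ the prefactor again collapses to $F_{1,s,u}/V(m)$, giving the same $\V[Z_1|Z_s,Z_u]=F_{1,s,u}\,V(\tilde m)/V(m)$. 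In both cases I then expand $V(\tilde m)=V(m)+vV'(m)\,\tfrac{M}{u-s}+av^2\,\tfrac{M^2}{(u-s)^2}$ with $M=(u-1)Z_s+(1-s)Z_u$ and observe that in the bi-Meixner regime $\sigma=\tau$, $\alpha=\beta$, $\gamma=1+2\sqrt{\sigma\tau}$ the right side of \eqref{EQ:q-Var} collapses to $F_{t,s,u}\bigl(1+\alpha\tfrac{M}{u-s}+\sigma\tfrac{M^2}{(u-s)^2}\bigr)$, a perfect square in $M$. Matching coefficients reads off $\alpha=\beta=vV'(m)/V(m)=V'(m)/\sqrt{V(m)(r-a)}$, $\sigma=\tau=av^2/V(m)=a/(r-a)$, and $\gamma=1+2\sigma=(r+a)/(r-a)$, with the constraint $\gamma\le 1+2\sqrt{\sigma\tau}$ holding with equality. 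The remaining configurations ($1<s<t<u$ by the time inversion noted after \eqref{Exp-Z}, and the mixed cases $s<t<1<u$ and $s<1<t<u$ by the Markov property, using that $(Z_r)_{r<1}$ and $(Z_r)_{r>1}$ are conditionally independent given $Z_1$, hence given $\Theta$) follow from the two generating cases as in the proof of Proposition \ref{P2.3}. The main obstacle is the boundary case: justifying the integration by parts for the tilted law (checking that \eqref{Asume1}, \eqref{Asume2} persist with the shifted parameter $\tilde p=x+z+p$, where $x+z$ lies in the support by infinite divisibility, and that the requisite integrability holds), and carrying out the prefactor simplification. The conceptual crux is recognizing the unifying form $F_{t,s,u}\,V(\tilde m)/V(m)$ and the perfect-square collapse, which is what allows a single parameter set to serve both regimes.
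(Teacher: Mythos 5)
Your proposal follows the same architecture as the paper's own proof: harness property taken from Proposition \ref{P2.3}; covariance \eqref{EQ:cov} via $\E(\kappa''(\Theta))=r\,\Var(\kappa'(\Theta))$ (the paper's \eqref{Amazing}); the quadratic conditional variance verified in the two generating configurations, namely $s<t<u<1$ by reducing to the L\'evy process through Lemma \ref{L-two-sided} and proving $\V[\xi_t|\xi_s,\xi_u]=\tfrac{(t-s)(u-t)}{u-s+a}V\bigl(\tfrac{\xi_u-\xi_s}{u-s}\bigr)$ (the paper's \eqref{miniVY}), and $t=1$ by exploiting the exponential form of the law of $\Theta$ given $(Y_{s'},Y'_{u'})$ (the paper's Claim \ref{EW|YY'-exp}, whose integration by parts against \eqref{mumu} is exactly your ``tilted parameters $(\tilde p,\tilde r)=(x+z+p,\,r+s'+u')$'' observation); then propagation to the remaining time configurations. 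Where you add value is in making explicit what the paper omits as ``calculation omitted'': the identity $v^2=V(m)/(r-a)$ and the unified form $\V[Z_t|Z_s,Z_u]=F_{t,s,u}\,V(\tilde m)/V(m)$ with the perfect-square collapse; I checked these against \eqref{EQ:q-Var} and Table \ref{Table2} and they are correct in both regimes, so this is a clean way to read off $\alpha=\beta$, $\sigma=\tau$, $\gamma=1+2\sigma$.

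Two steps are thinner than they need to be. First, your derivation of $\V[\xi_t|\xi_s,\xi_u]$ \emph{assumes} that $\V[A\mid A+B=w]$ is a quadratic polynomial in $w$; that is not a hypothesis of the theorem but precisely the fact to be established. Coefficient matching across the tilted family fixes which quadratic it would have to be, but to conclude that the conditional variance actually equals that polynomial you need completeness of the exponential family (equivalently the Laplace-transform computation the paper carries out in \eqref{t2}--\eqref{z2}, invoking \cite[Section 1.1.3]{Kagan-Linnik-Rao:1973}); say so, since this is the entire content of \eqref{miniVV}. Second, and more substantively, the mixed cases $s<t<1<u$ and $s<1<t<u$ do \emph{not} follow ``as in the proof of Proposition \ref{P2.3}'': that argument propagates conditional \emph{expectations} only. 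For conditional variances one needs the decomposition $\V(Z_t|Z_s,Z_u)=\E\bigl(\V(Z_t|Z_s,Z_1)\,\big|\,Z_s,Z_u\bigr)+\V\bigl(\E(Z_t|Z_s,Z_1)\,\big|\,Z_s,Z_u\bigr)$ and then a genuine computation --- the paper's dedicated Lemma \ref{Generic-lemma} --- showing that $\E(K(Z_s,Z_1)|Z_s,Z_u)$ is a constant multiple of $K(Z_s,Z_u)$, i.e., that the quadratic form reproduces itself with the \emph{same} parameters under conditioning; the paper also uses the observation that the prefactor $F_{t,s,u}$ need not be tracked because it is forced by taking expectations against \eqref{EQ:cov}. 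Your two generating cases plus the harness/Markov property are exactly the inputs this lemma consumes, so the gap is fillable, but as written this step is asserted rather than proved, and it is the one nontrivial piece of bookkeeping in the whole argument.
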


\begin{table}[hbt]
\begin{tabular}{|ccc|c|}
$\ceta=\ctheta$ & $\sigma=\tau$ & domain & L\'evy process $(\xi_t)$\\ \hline
0&0&  $r>0$, $p\in\RR$ & \mbox{ Wiener (Example \ref{P1-Wiener})} \\
&&&\\
$\frac{1}{\sqrt{p}}$&0& $r>0$, $p>0$ & \mbox{ Poisson (Example \ref{P1-Poiss})} \\
&&&\\
$\frac{2}{\sqrt{r-1}}$& $\frac{1}{r-1}$& $r>1$, $p>0$ & \mbox{ gamma (Example \ref{P1-gamma})}\\
&&&\\
$\frac{r+2p}{\sqrt{p(p+r)(r-1)}}$&$\frac{1}{r-1}$ &  $r>1$, $p> 0$ &  \mbox{ negative binomial  (Example \ref{P1-Pasc})}\\
&&&\\
$\frac{2p}{\sqrt{(p^2+r^2)(2r-1)}}$ &$\frac{1}{2r-1} $ & $r>1/2$, $p\in\RR$ & \mbox{ hyperbolic secant (Example \ref{P1-Meix})}
\end{tabular}
\caption{Parameters of quadratic harnesses on $(0,\infty)$,  under appropriate randomization with parameters $r,p$. \label{Table2}}
\end{table}

\section{Proofs}\label{Sect:Proofs}
  \subsection{Stitching Lemma}
The following technical lemma will be used  to stitch together two quadratic harnesses on adjacent intervals.
\begin{lemma}\label{Generic-lemma}
Suppose a square-integrable Markov  $(Z_t)_{t\in(0,\infty)}$ is a harness , and that both  $(Z_t)_{t\in(0,1)}$ and  $(Z_t)_{t\in(1,\infty)}$ are quadratic harness with the same parameters $\ceta,\ctheta,\sigma,\tau,\gamma$.
  If  $\Var(Z_1|Z_s,Z_u)$ is given by the formula \eqref{EQ:q-Var} with $t=1$, and  with the same parameters $\ceta,\ctheta,\sigma,\tau,\gamma$,  then $(Z_t)_{t>0}$  is a quadratic harness on $(0,\infty)$.
\end{lemma}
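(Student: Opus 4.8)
The plan is to check the two requirements of Definition~\ref{Def_1} for $(Z_t)_{t\in(0,\infty)}$. The harness property on $(0,\infty)$ is given, so requirement~(i) reduces to extending the moment identities \eqref{EQ:cov} across the joint $t=1$, while requirement~(ii) reduces to extending the quadratic conditional variance formula \eqref{EQ:q-Var} to the orderings $s<t<u$ that straddle $1$; the orderings lying entirely in $(0,1)$ or entirely in $(1,\infty)$, together with the case $t=1$, hold by hypothesis.

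For the moments, within each subinterval $\E(Z_t)=0$ and $\E(Z_sZ_t)=\min\{s,t\}$ hold by assumption, and $\E(Z_1)=0$ because the harness relation forces the mean to be affine in $t$. For $s\in(0,1)$ and $u\in(1,\infty)$ I would pick an auxiliary point $t\in(s,1)$, multiply the harness identity $\E(Z_t\mid Z_s,Z_u)=\frac{u-t}{u-s}Z_s+\frac{t-s}{u-s}Z_u$ by $Z_s$, and take expectations; since $\E(Z_sZ_t)=s$ and $\E(Z_s^2)=s$ this forces $\E(Z_sZ_u)=s=\min\{s,u\}$. The covariances $\E(Z_sZ_1)$ and $\E(Z_1Z_u)$ follow from the same device applied with $u=1$ or $s=1$, and $\E(Z_1^2)=1$ follows from the law of total variance $\Var(Z_1)=\E[\Var(Z_1\mid Z_s,Z_u)]+\Var[\E(Z_1\mid Z_s,Z_u)]$ once the assumed form of $\Var(Z_1\mid Z_s,Z_u)$ is inserted: there the denominator $u(1+s\sigma)+\tau-s\gamma$ of $F_{1,s,u}$ cancels against the completed numerator, yielding $\E[\Var(Z_1\mid Z_s,Z_u)]=\tfrac{(u-1)(1-s)}{u-s}$, and the two pieces sum to $1$.

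The core is requirement~(ii) in the straddling ordering $s<t<1<u$. Conditioning further on $Z_1$ and using the Markov property, which decouples $Z_t$ (with $t<1$) from $Z_u$ (with $u>1$) given $Z_s,Z_1$, the law of total variance gives
\begin{equation*}
\Var(Z_t\mid Z_s,Z_u)=\E\!\left[\Var(Z_t\mid Z_s,Z_1)\mid Z_s,Z_u\right]+\Var\!\left[\E(Z_t\mid Z_s,Z_1)\mid Z_s,Z_u\right].
\end{equation*}
Into the right-hand side I would substitute the quadratic-harness data on $(0,1)$: the regression $\E(Z_t\mid Z_s,Z_1)=\frac{1-t}{1-s}Z_s+\frac{t-s}{1-s}Z_1$ and the formula \eqref{EQ:q-Var} for $\Var(Z_t\mid Z_s,Z_1)$ with upper point $1$. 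The outer conditioning on $Z_s,Z_u$ is then carried out by replacing each linear occurrence of $Z_1$ by $\E(Z_1\mid Z_s,Z_u)=\frac{u-1}{u-s}Z_s+\frac{1-s}{u-s}Z_u$ and the quadratic occurrence $Z_1^2$ by $\E(Z_1^2\mid Z_s,Z_u)=\Var(Z_1\mid Z_s,Z_u)+\left(\E(Z_1\mid Z_s,Z_u)\right)^2$, with $\Var(Z_1\mid Z_s,Z_u)$ the assumed expression \eqref{EQ:q-Var} at $t=1$. Collecting terms, I expect the outcome to reassemble into \eqref{EQ:q-Var} at $(s,t,u)$ with the same parameters $\ceta,\ctheta,\sigma,\tau,\gamma$.

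The main obstacle is precisely this last algebraic collapse: one must check that the two contributions—one carrying the factor $F_{t,s,1}$ with denominator $1+s\sigma+\tau-s\gamma$, the other carrying $F_{1,s,u}$ and $\left(\tfrac{t-s}{1-s}\right)^2$ with denominator $u(1+s\sigma)+\tau-s\gamma$—combine so that the spurious denominator cancels, the prefactor reduces to $F_{t,s,u}$, and the coefficients of $\frac{uZ_s-sZ_u}{u-s}$, $\frac{Z_u-Z_s}{u-s}$, their squares, and their cross term reproduce $\ceta,\ctheta,\sigma,\tau,\gamma$ unchanged. The telescoping I recorded at $t=1$ in the moment step (where the completed numerator equals the denominator of $F_{1,s,u}$) is the prototype of the identity that must hold throughout as a polynomial in $Z_s,Z_u$; the hypothesis that the two subintervals carry identical parameters is exactly what keeps the output parameters fixed. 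The mirror ordering $s<1<t<u$ is treated by the entirely analogous computation, now conditioning on $Z_1$ and using the quadratic harness on $(1,\infty)$, and the boundary orderings with $s=1$ or $u=1$ follow by a limiting argument as in the proof of Proposition~\ref{P2.3}.
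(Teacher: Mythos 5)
Your skeleton is exactly the paper's: reduce to the ordering $s<t<1<u$, then apply the Markov property and the law of total variance at the intermediate point $Z_1$,
\begin{equation*}
\Var(Z_t|Z_s,Z_u)=\E\big(\Var(Z_t|Z_s,Z_1)\,\big|\,Z_s,Z_u\big)+\Var\big(\E(Z_t|Z_s,Z_1)\,\big|\,Z_s,Z_u\big),
\end{equation*}
with the complementary ordering handled by time inversion/symmetry. Your treatment of the moment conditions \eqref{EQ:cov} across the junction (multiplying the harness identity \eqref{LR} by $Z_s$ and taking expectations) is correct and in fact more explicit than the lemma's own proof, which leaves the covariance to Proposition \ref{P2.4} in the application. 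The problem is that what you yourself call ``the main obstacle''---the collapse of the two contributions into $F_{t,s,u}$ times the quadratic form of \eqref{EQ:q-Var} with the \emph{same} five parameters---is precisely the content of the lemma, and you do not carry it out: ``I expect the outcome to reassemble'' is an assertion, not a proof. As written, your argument sets up the computation and stops at the only nontrivial point.

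The paper closes this step with two devices that remove nearly all of the algebra you are bracing for. First, writing the right-hand side of \eqref{EQ:q-Var} as $F_{t,s,u}\,K(Z_s,Z_u)$, the prefactor never needs to be tracked: once \eqref{EQ:cov} and the harness property hold, taking expectations of both sides of \eqref{EQ:q-Var} forces the constant (here one uses $\E K(Z_s,Z_u)=\frac{u(1+s\sigma)+\tau-s\gamma}{u-s}\neq 0$), so it suffices to prove \emph{proportionality}, i.e.\ that $\E\big(K(Z_s,Z_1)\,|\,Z_s,Z_u\big)=C_{s,u}\,K(Z_s,Z_u)$ for some constant $C_{s,u}$. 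Second, in the variables $\Delta_{s,t}=\frac{Z_t-Z_s}{t-s}$ and $\widetilde\Delta_{s,t}=\frac{tZ_s-sZ_t}{t-s}$, in which $K$ is a quadratic polynomial, the harness identity gives $\E(\Delta_{s,1}|Z_s,Z_u)=\Delta_{s,u}$ and $\E(\widetilde\Delta_{s,1}|Z_s,Z_u)=\widetilde\Delta_{s,u}$, while the conditional second moments differ from $\Delta_{s,u}^2$, $\widetilde\Delta_{s,u}^2$, $\Delta_{s,u}\widetilde\Delta_{s,u}$ only by the multiples $\frac{1}{(1-s)^2}$, $\frac{s^2}{(1-s)^2}$, $-\frac{s}{(1-s)^2}$ of $\Var(Z_1|Z_s,Z_u)$, which is itself proportional to $K(Z_s,Z_u)$ by hypothesis. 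The proportionality then falls out in three lines, and no cancellation of the denominators $1+s\sigma+\tau-s\gamma$ versus $u(1+s\sigma)+\tau-s\gamma$ ever has to be checked. If you insist on your direct substitution of $\E(Z_1|Z_s,Z_u)$ and $\E(Z_1^2|Z_s,Z_u)$, the plan is sound in principle, but you must actually display the resulting identity of quadratic polynomials in $Z_s,Z_u$; until then the proof has a genuine gap at its decisive step.
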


\begin{proof}
 Denote
\begin{equation}\label{Z2Delta}
\Delta_{s,t}=\tfrac{Z_t-Z_s}{t-s},\quad \widetilde \Delta_{s,t}=\tfrac{tZ_s-sZ_t}{t-s}.
\end{equation}

By time-inversion, it suffices to consider formula \eqref{EQ:q-Var} in the case $s<t<1<u$.  By Markov property,
$$
\Var(Z_t|Z_s,Z_u)=\E\big(\Var(Z_t|Z_s,Z_1)\big|Z_s,Z_u\big)+\Var(\E(Z_t|Z_s,Z_1)|Z_s,Z_u)\,.
$$
Denote the right hand side of \eqref{EQ:q-Var} by $F_{t,s,u}K(Z_s,Z_u)$. Since $\E(Z_t|Z_s,Z_1)$ is given by \eqref{LR},
 \begin{multline*}\label{tst}
 \Var(\E(Z_t|\calF_{s,1})|\calF_{s,u})=\frac{(t-s)^2}{(1-s)^2}\Var(Z_1|Z_s,Z_u)
 \\=\frac{(t-s)^2(u-1)}{(1-s)(u(1+\sigma s)+\tau-\gamma s)} K(Z_s,Z_u)\,.
\end{multline*}
Next, we write
$$
\E\big(\Var(Z_t|\calF_{s,1})\big|\calF_{s,u}\big)=
\frac{(1-t) (t-s)}{s \sigma +\tau +1-s \gamma }\E(K(Z_s,Z_1)|Z_s,Z_u).
 $$
Since the coefficient $F_{t,s,u}$ is determined by integrating both sides of \eqref{EQ:q-Var}, to end the proof, it suffices to show that
$\E(K(Z_s,Z_1)|Z_s,Z_u)$ is a constant multiple of $K(Z_s,Z_u)$, and we do not need to keep track of the constants.
So it remains to show that
\begin{equation}\label{KKK}
\E(K(Z_s,Z_1)|Z_s,Z_u)=C_{s,u} K(Z_s,Z_u)
\end{equation}
 for any $s<1<u$ and some  constant $C_{s,u}$.

 We have
\begin{equation}\label{K2Delta}
K(Z_s,Z_t)=1+\ceta\widetilde \Delta_{s,t}+\ctheta \Delta_{s,t}+\sigma \widetilde \Delta_{s,t}^2+\tau  \Delta_{s,t}^2-(1-\gamma)\widetilde \Delta_{s,t} \Delta_{s,t}.
\end{equation}

It is easy to check that \eqref{LR} implies %
\begin{equation}\label{DDD}
  \E(\Delta_{s,t}|\calF_{s,u})=\Delta_{s,u},\; \E(\widetilde \Delta_{s,t}|\calF_{s,u})=\widetilde\Delta_{s,u}.
\end{equation}

Since $\Var(\Delta _{s,t}|\calF_{s,u})$, $\Var(\widetilde \Delta _{s,t}|\calF_{s,u})$ and $\cov(\Delta _{s,t},\widetilde \Delta _{s,t}|\calF_{s,u})$ are all proportional to $\Var(Z_t|\calF_{s,u})$, see \eqref{Z2Delta}, from  \eqref{DDD}
 we get
\begin{eqnarray*}
 \E(\Delta^2_{s,1}|\calF_{s,u})&=&\Delta_{s,u}^2+\frac{1}{(1-s)^2}\Var(Z_1|\calF_{s,u}) \,,\\
  \E(\widetilde\Delta^2_{s,1}|\calF_{s,u})&=&\widetilde\Delta_{s,u}^2+\frac{s^2}{(1-s)^2}\Var(Z_1|\calF_{s,u})\,, \\
  \E(\Delta_{s,1}\widetilde\Delta_{s,1}|\calF_{s,u})&=&\Delta_{s,u}\widetilde\Delta_{s,u}-\frac{s}{(1-s)^2}\Var(Z_1|\calF_{s,u})\,.
\end{eqnarray*}
By assumption (ii), $\Var(Z_1|\calF_{s,u})$ is proportional to $K(Z_s,Z_u)$. Using \eqref{K2Delta}, from these formulas together with   \eqref{DDD} we get
$$
\E(K(Z_s,Z_1)|Z_s,Z_u)=K(Z_s,Z_u)+ \frac{\tau+\sigma s^2+(1-\gamma)s}{(1-s)^2} K(Z_s,Z_u),
$$
which proves \eqref{KKK}.
\end{proof}

 \subsection{Proof of Theorem \ref{T-main}}\label{Sect:StpoPp}
The proof consists of series of Claims which verify the assumptions of Lemma \ref{Generic-lemma}.
Some steps do not rely on the specific law of $\Theta$ and in such cases we denote $m=\E(\kappa'(\Theta))$ and $v^2=\Var(\kappa'(\Theta))$.

 We first prove an auxiliary formula.
\begin{lemma}
 If $\xi_t$ is square-integrable, \eqref{Asume2}  holds and
$\Theta$ has law \eqref{Exponential-law} with $p,r$  such that $\kappa'(\Theta)$ is square integrable, then
\begin{equation}\label{Amazing}
\E(\Var(Y_s|\Theta))={s}{r}\Var(\kappa'(\Theta)).
\end{equation}
\end{lemma}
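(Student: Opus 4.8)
The plan is to first reduce \eqref{Amazing} to a statement purely about the law of $\Theta$, and then to establish that statement by a single integration by parts. Since $(Y_t)$ conditionally on $\Theta=\theta$ has the finite-dimensional distributions of $(X_t^{(\theta)})$, and $\Var(X_s^{(\theta)})=s\kappa''(\theta)$ as recalled in Section \ref{Sect:MCRMP}, I would first observe that $\Var(Y_s|\Theta)=s\kappa''(\Theta)$. Taking expectations gives $\E(\Var(Y_s|\Theta))=s\,\E(\kappa''(\Theta))$, so it suffices to prove the identity $\E(\kappa''(\Theta))=r\,\Var(\kappa'(\Theta))$.

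For this identity I would differentiate the product $\kappa'(\theta)e^{p\theta-r\kappa(\theta)}$, namely
\begin{equation*}
\frac{d}{d\theta}\Big(\kappa'(\theta)\,e^{p\theta-r\kappa(\theta)}\Big)=\big(\kappa''(\theta)+p\,\kappa'(\theta)-r\,\kappa'(\theta)^2\big)\,e^{p\theta-r\kappa(\theta)}.
\end{equation*}
Integrating over $(\theta_0,\theta_1)$ against the normalizing constant $C$ of \eqref{Exponential-law}, the left-hand side contributes only the boundary values $C\kappa'(\theta)e^{p\theta-r\kappa(\theta)}\big|_{\theta_0}^{\theta_1}$, which vanish by Assumption \eqref{Asume2} taken at $x=0$. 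This yields
\begin{equation*}
\E(\kappa''(\Theta))+p\,\E(\kappa'(\Theta))-r\,\E(\kappa'(\Theta)^2)=0.
\end{equation*}
Finally I would substitute $\E(\kappa'(\Theta))=p/r$ from \eqref{p/r}, equivalently $p=r\,\E(\kappa'(\Theta))$, to rewrite the middle term; this collapses the expression to $\E(\kappa''(\Theta))=r\big(\E(\kappa'(\Theta)^2)-(\E\kappa'(\Theta))^2\big)=r\,\Var(\kappa'(\Theta))$, and multiplying by $s$ gives \eqref{Amazing}.

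The routine technical points to verify are that square integrability of $\kappa'(\Theta)$ makes $\E(\kappa'(\Theta)^2)$, $\E(\kappa'(\Theta))$, and (via the identity) $\E(\kappa''(\Theta))$ finite, and that the fundamental-theorem-of-calculus step is legitimate on the open interval $(\theta_0,\theta_1)$. The main obstacle is the vanishing of the boundary term: this is exactly the $x=0$ instance of \eqref{Asume2}, so I would need to confirm that this instance is in force. It holds in each Meixner case under the stated restrictions on $r$ (and, where required, $p$); more conceptually, the infinite divisibility of $(\xi_t)$ places $0$ in the closed support of $g$, and \eqref{Asume2} is precisely the hypothesis forcing $\kappa'(\theta)e^{(p+x)\theta-r\kappa(\theta)}\to 0$ at both endpoints. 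Once this decay is in hand, every other step is a direct computation.
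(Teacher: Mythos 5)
Your proposal is correct and follows essentially the same route as the paper's proof: both reduce \eqref{Amazing} to the identity $\E(\kappa''(\Theta))=r\Var(\kappa'(\Theta))$ via $\Var(Y_s|\Theta)=s\kappa''(\Theta)$, obtain that identity by integrating the derivative of $\kappa'(\theta)e^{p\theta-r\kappa(\theta)}$ (your product-rule-plus-FTC step is exactly the paper's integration by parts), kill the boundary terms using the $x=0$ instance of \eqref{Asume2}, and finish with \eqref{p/r}. The one inaccuracy is your parenthetical claim that infinite divisibility places $0$ in the closed support of $g$ --- this is false in general (e.g.\ the L\'evy process $\xi_t=t+N_t$ with $N$ Poisson has $g$ supported on $\{1,2,\dots\}$) --- but it is harmless here, since the paper itself applies \eqref{Asume2} at $x=0$ without comment and $0$ does lie in the support of $g$ in every case treated in Section \ref{Sect:Meinxer-Examples}.
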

\begin{proof}
Integrating by parts and using \eqref{Asume2} we get \begin{multline*}
C\int_{\theta_0}^{\theta_1}\kappa''(\theta) e^{p\theta -r \kappa(\theta)}d\theta
=C \kappa'(\theta) e^{p\theta -r \kappa(\theta)}\Big|_{\theta=\theta_0}^{\theta=\theta_1}-
\int_{\theta_0}^{\theta_1}\kappa'(\theta)\left(p-r\kappa'(\theta)\right) h(d\theta)
\\=
r\int_{\theta_0}^{\theta_1}(\kappa'(\theta))^2 h(d\theta)
-p\int_{\theta_0}^{\theta_1}\kappa'(\theta) h(d\theta) =
r \Var(\kappa'(\Theta)),
\end{multline*}
where in the last step we used \eqref{p/r}. Since $\Var(X_s^{(\theta)})=s\kappa''(\theta)$,
we get \eqref{Amazing}.
\end{proof}
Next we identify the covariance of the stitched process.
\begin{proposition}\label{P2.4}
If $(\xi_t)$ is square-integrable, \eqref{Asume2} holds and
$\Theta$ has law \eqref{Exponential-law} with $p\in\RR$, $r>0$
such that $\kappa'(\Theta)$ is square integrable, then with $v^2= \Var(\kappa'(\Theta))$, the stitched process $(Z_t)$ has covariance \eqref{EQ:cov}.
\end{proposition}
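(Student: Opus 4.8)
The plan is to reduce everything to the mean and covariance of the auxiliary process $(Y_t)$ and then push these through the deterministic time-change in \eqref{Exp-Z}. First I would record the two moments of $(Y_t)$. Conditionally on $\Theta$, the process $(Y_t)$ is the L\'evy process $(X_t^{(\Theta)})$, hence has independent increments with $\E(Y_x\mid\Theta)=x\kappa'(\Theta)$ and $\Var(Y_x\mid\Theta)=x\kappa''(\Theta)$. From \eqref{p/r} this gives $\E(Y_x)=x\,\E(\kappa'(\Theta))=xp/r$. For the covariance I would use the conditional decomposition $\Cov(Y_x,Y_y)=\E\big(\Cov(Y_x,Y_y\mid\Theta)\big)+\Cov\big(\E(Y_x\mid\Theta),\E(Y_y\mid\Theta)\big)$; by independence of increments the first term is $\min\{x,y\}\,\E(\kappa''(\Theta))$, and $\E(\kappa''(\Theta))=rv^2$ is exactly \eqref{Amazing}, while the second term equals $xy\,\Var(\kappa'(\Theta))=xyv^2$. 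Thus $\Cov(Y_x,Y_y)=v^2\big(r\min\{x,y\}+xy\big)$, and this single formula carries all the probabilistic content.

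Next I would observe that each branch of \eqref{Exp-Z} is simply a centered, rescaled copy of $(Y_t)$ (or of $\kappa'(\Theta)$). Writing $s'=rs/(1-s)$ for $0\le s<1$, the identity $(1-s)s'=rs$ shows that the deterministic shift $-sp/(rv)$ cancels $\tfrac{1-s}{rv}\E(Y_{s'})$, so $Z_s=\tfrac{1-s}{rv}\big(Y_{s'}-\E Y_{s'}\big)$; similarly $Z_1=\tfrac1v(\kappa'(\Theta)-p/r)$ and, for $t>1$ with $u'=r/(t-1)$, $Z_t=\tfrac{t-1}{rv}\big(Y'_{u'}-\E Y'_{u'}\big)$. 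In particular every $Z_t$ is centered, which is the first half of \eqref{EQ:cov}.

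For the covariance I would treat three regimes; no Markov reduction is needed since only second moments are at stake. When $s<t<1$ both coordinates come from $(Y_t)$, and $\Cov(Z_s,Z_t)=\tfrac{(1-s)(1-t)}{r^2v^2}\Cov(Y_{s'},Y_{t'})$; inserting the Step~1 formula with $\min\{s',t'\}=s'$ and using $r+t'=r/(1-t)$ collapses the expression to $s=\min\{s,t\}$. When $s<1\le t$ the two coordinates depend on the $\Theta$-conditionally independent pieces $(Y_t)$ and $(Y_t')$ (or on $\Theta$ alone at $t=1$), so the conditional covariance vanishes and $\Cov(Z_s,Z_t)=\Cov\big(\E(Z_s\mid\Theta),\E(Z_t\mid\Theta)\big)$ with $\E(Z_s\mid\Theta)=\tfrac sv(\kappa'(\Theta)-p/r)$ and $\E(Z_t\mid\Theta)=\tfrac1v(\kappa'(\Theta)-p/r)$, giving $\tfrac s{v^2}\Var(\kappa'(\Theta))=s=\min\{s,t\}$. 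The remaining regime $1<s<t$ is the same computation run on the copy $(Y'_t)$ (equivalently, the time-inversion noted after \eqref{Exp-Z}), again yielding $\min\{s,t\}$.

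The only genuine input beyond bookkeeping is \eqref{Amazing}, that is $\E(\kappa''(\Theta))=rv^2$, which is where the boundary assumption \eqref{Asume2} and the mean relation \eqref{p/r} enter; everything else is the cancellation forced by the time-change. I expect the main obstacle to be the regime $s<t<1$: there the $\min$-structure of the L\'evy covariance must survive the nonlinear reparametrization $t\mapsto rt/(1-t)$, and one must verify that the contributions involving $(p/r)^2$ cancel and the $v^2$-contributions combine to give precisely $\min\{s,t\}$ rather than some other affine function of $s$ and $t$.
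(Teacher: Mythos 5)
Your proof is correct and follows essentially the same route as the paper's: the covariance formula $\Cov(Y_x,Y_y)=v^2\bigl(r\min\{x,y\}+xy\bigr)$ obtained from the conditional covariance decomposition together with \eqref{Amazing}, the time-change bookkeeping for the regimes on either side of $t=1$, and $\Theta$-conditional independence for the cross regime. The only (cosmetic) difference is that you handle the boundary time $t=1$ directly, using that $Z_1$ is a function of $\Theta$ alone, whereas the paper reduces that case to $s<1<u$ via the mean-square limit $Z_1=\lim_{s\to 1^-}Z_s$ supplied by the law of large numbers.
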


\begin{claim}\label{Claim1} For $s<t$ we have
\begin{equation}\label{miniY}
\cov(Y_s,Y_t)=  v^2 s(t+r)
\end{equation}
Thus $\Cov(Z_s, Z_t)=s$ if $s<t<1$ or if $1<s<t$.
\end{claim}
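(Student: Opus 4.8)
The plan is to obtain the covariance of $(Y_t)$ by the law of total covariance with respect to $\Theta$, and then transfer the result to $(Z_t)$ through the deterministic reparametrization in \eqref{Exp-Z}. Conditionally on $\Theta=\theta$, the process $(Y_t)$ has the law of the L\'evy process $(X_t^{(\theta)})$, so $\E(Y_t|\Theta)=t\kappa'(\Theta)$ and $\Var(Y_t|\Theta)=t\kappa''(\Theta)$. The one point that needs the L\'evy structure is the conditional covariance: for $s<t$, conditional independence of increments of $(Y_t)$ given $\Theta$ yields $\cov(Y_s,Y_t|\Theta)=\Var(Y_s|\Theta)=s\kappa''(\Theta)$.

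Combining the two contributions, for $s<t$ I would write
\[
\cov(Y_s,Y_t)=\E\big(\cov(Y_s,Y_t|\Theta)\big)+\cov\big(\E(Y_s|\Theta),\E(Y_t|\Theta)\big)=s\,\E(\kappa''(\Theta))+st\,\Var(\kappa'(\Theta)).
\]
The computation already carried out in the proof of \eqref{Amazing} gives $\E(\kappa''(\Theta))=r\,\Var(\kappa'(\Theta))=rv^2$. Substituting this and $\Var(\kappa'(\Theta))=v^2$ produces $\cov(Y_s,Y_t)=sv^2(t+r)=v^2s(t+r)$, which is \eqref{miniY}.

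For the statement about $(Z_t)$, I would plug \eqref{Exp-Z} into the covariance. For $s<t<1$ the deterministic additive terms drop out, and with $s',t'$ as in \eqref{stu} (an increasing change of variable, so $s'<t'$),
\[
\cov(Z_s,Z_t)=\frac{(1-s)(1-t)}{r^2v^2}\,\cov(Y_{s'},Y_{t'})=\frac{(1-s)(1-t)}{r^2v^2}\,v^2\,s'(t'+r).
\]
Since $t'+r=r/(1-t)$, one gets $s'(t'+r)=r^2s/\big((1-s)(1-t)\big)$, and the factors cancel to leave $\cov(Z_s,Z_t)=s$. The case $1<s<t$ is the time-inversion of the case just treated; alternatively one substitutes the $t>1$ branch of \eqref{Exp-Z}, sets $s''=r/(s-1)$, $t''=r/(t-1)$ (now $t''<s''$ since the map reverses order), applies \eqref{miniY} to $(Y_t')$, which has the same law as $(Y_t)$, and simplifies in the same way to obtain $s$.

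I do not expect a genuine obstacle here; the argument is a conditioning step followed by routine algebra. The only place calling for care is the conditional covariance in the first paragraph: it is essential to use that $(Y_t)$ has conditionally independent increments \emph{given $\Theta$}, since the unconditional process $(Y_t)$ does not have independent increments, and this is exactly what produces the extra term $st\,\Var(\kappa'(\Theta))$ that distinguishes $\cov(Y_s,Y_t)$ from the plain L\'evy covariance.
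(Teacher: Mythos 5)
Your proposal is correct and follows essentially the same route as the paper: the law of total covariance with respect to $\Theta$, the identity $\E(\kappa''(\Theta))=r\Var(\kappa'(\Theta))$ from the proof of \eqref{Amazing} (equivalently, \eqref{Amazing} itself, since $\Var(Y_s|\Theta)=s\kappa''(\Theta)$), and then substitution of \eqref{Exp-Z} with the time changes \eqref{stu}. Your write-up merely spells out the algebra for $(Z_t)$ that the paper leaves as a one-line remark.
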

\begin{proof}   From
\begin{equation*}\label{G:cov}
\Cov(Y_s,Y_t)=\E(\Cov(Y_s,Y_t|\Theta))+\Cov(\E(Y_s|\Theta),\E(Y_t|\Theta)),
\end{equation*}
 we see that for $s\leq t$, $\Cov(Y_s,Y_t)=s v^2 t+  \E\Var(Y_s|\Theta)$, and the formula follows from \eqref{Amazing}.

 Using \eqref{Exp-Z}, from \eqref{miniY} we compute  $\Cov(Z_s, Z_t)$ on $(0,1)$ and on $(1,\infty)$.
\end{proof}
\begin{proof}[Proof of Proposition \ref{P2.4}]
By Claim \ref{Claim1},  the covariance is as required for $0\leq s<u<1$ and   for $1<s<u$, so by time-reversibility argument it remains only to consider the case $s\leq 1<u$.

Since by the law of large numbers $Y_t/t\to \kappa'(\Theta)$ in mean square as $t\to\infty$, we     have $Z_1=\lim_{s\to 1-}Z_s$  in mean square.
Therefore, we only need to consider the case  $s<1<u$.
The argument here does not depend on the specific law of $\Theta$. Using notation \eqref{rprmt}, from  \eqref{Exp-Z}
 we get
$$
\cov(Z_s,Z_u)=\frac{(1-s)(u-1) }{r^2v^2}\cov(Y_{s'},Y'_{u'})\,.$$
By conditional independence
$$\cov(Y_{s'},Y'_{u'})=  \cov(\E(Y_{s'}|\Theta),\E(Y'_{u'}|\Theta))=s'u' \Var(\kappa'(\Theta))= \frac{ v ^2 r^2 s}{(1-s)(u-1)}.$$
So $\cov(Z_s,Z_u)=\min\{s,u\}$ and  \eqref{EQ:cov} holds.
\end{proof}

\begin{claim}\label{Cm}  $(Z_t)_{t\in(0,1)}$  and $(Z_t)_{t\in(1,\infty)}$ are quadratic harnesses with  the same parameters $ \ceta= \ctheta$, $\sigma=\tau$,
 $\gamma=1+2\sqrt{\sigma\tau}$,
as specified in Theorem \ref{T-main}. 
\end{claim}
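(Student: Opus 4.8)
The plan is to verify the second hypothesis of Lemma \ref{Generic-lemma} for each half of the process, namely to compute $\Var(Z_t\mid Z_s,Z_u)$ and match it to the right-hand side of \eqref{EQ:q-Var}. By the time-inversion remark following \eqref{Exp-Z} it suffices to treat the range $s<t<u<1$: the range $1<s<t<u$ is identical after replacing $(Y_t)$ by $(Y_t')$ and using the reparametrization $t\mapsto r/(t-1)$, and in any case the claimed parameters are invariant under time inversion because $\ceta=\ctheta$ and $\sigma=\tau$.

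First I would record the variance identity
\[
v^2=\Var(\kappa'(\Theta))=\frac{V(m)}{r-a},\qquad m=\frac pr,
\]
which follows from \eqref{Amazing}. Indeed, the intermediate computation in the proof of \eqref{Amazing} gives $\int\kappa''(\theta)h(d\theta)=r\,\Var(\kappa'(\Theta))=rv^2$, whereas the quadratic variance function \eqref{QV} together with \eqref{Var-Funct} gives $\kappa''(\Theta)=V(\kappa'(\Theta))=a\kappa'(\Theta)^2+b\kappa'(\Theta)+c$, so that $\E\kappa''(\Theta)=a(v^2+m^2)+bm+c=av^2+V(m)$; equating the two expressions yields $(r-a)v^2=V(m)$. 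The second input is the two-sided conditional variance of the underlying L\'evy-Meixner process: since $(\xi_t)$ becomes a standard quadratic harness after centering and standardization (parameters $\gamma=1$, $\ceta=\sigma=0$), and conditional variance is invariant under a deterministic affine shift and scales as the square under rescaling,
\[
\Var(\xi_t\mid\xi_s,\xi_u)=\frac{(t-s)(u-t)}{(u-s)+a}\,V\!\left(\frac{\xi_u-\xi_s}{u-s}\right).
\]
By Lemma \ref{L-two-sided} the same formula holds with $(\xi_t)$ replaced by $(Y_t)$.

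Then I would run the reparametrization \eqref{stu}. Writing $s'=\tfrac{rs}{1-s}$ and so on, and $Z_t=\tfrac{1-t}{rv}Y_{t'}-\tfrac{tp}{rv}$, conditioning on $Z_s,Z_u$ is the same as conditioning on $Y_{s'},Y_{u'}$, so
\[
\Var(Z_t\mid Z_s,Z_u)=\Big(\tfrac{1-t}{rv}\Big)^2\frac{(t'-s')(u'-t')}{(u'-s')+a}\,V\!\left(\frac{Y_{u'}-Y_{s'}}{u'-s'}\right).
\]
Two algebraic identities finish the proof. First, from $t'-s'=r\tfrac{t-s}{(1-t)(1-s)}$ and its analogues one gets $\big(\tfrac{1-t}{rv}\big)^2\tfrac{(t'-s')(u'-t')}{(u'-s')+a}=\tfrac{F_{t,s,u}}{v^2(r-a)}$, the point being the identity $r(u-s)+a(1-u)(1-s)=(r-a)\big(u(1+s\sigma)+\tau-s\gamma\big)$ for $\sigma=\tau=\tfrac{a}{r-a}$, $\gamma=\tfrac{r+a}{r-a}$. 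Second, solving \eqref{Exp-Z} for $Y_{s'},Y_{u'}$ and using notation \eqref{Z2Delta} gives $\tfrac{Y_{u'}-Y_{s'}}{u'-s'}=m+v(\Delta_{s,u}+\widetilde\Delta_{s,u})$. Since $V$ is quadratic, $V\big(m+v(\Delta_{s,u}+\widetilde\Delta_{s,u})\big)=V(m)+vV'(m)(\Delta_{s,u}+\widetilde\Delta_{s,u})+av^2(\Delta_{s,u}+\widetilde\Delta_{s,u})^2$, and dividing by $v^2(r-a)=V(m)$ turns these three terms into $1$, into $\ceta\widetilde\Delta_{s,u}+\ctheta\Delta_{s,u}$ with $\ceta=\ctheta=\tfrac{V'(m)}{\sqrt{V(m)(r-a)}}$, and into $\tfrac{a}{r-a}(\Delta_{s,u}+\widetilde\Delta_{s,u})^2=\sigma\widetilde\Delta_{s,u}^2+\tau\Delta_{s,u}^2+2\tfrac{a}{r-a}\Delta_{s,u}\widetilde\Delta_{s,u}$, which is $K(Z_s,Z_u)$ of \eqref{K2Delta} because $-(1-\gamma)=2\tfrac{a}{r-a}$. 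Hence $\Var(Z_t\mid Z_s,Z_u)=F_{t,s,u}K(Z_s,Z_u)$, i.e. \eqref{EQ:q-Var} with the parameters of Theorem \ref{T-main}.

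The routine but delicate part is the reparametrization bookkeeping; the conceptual crux is that the whole expression collapses to $K(Z_s,Z_u)$ only because $v^2(r-a)=V(m)$, the identity extracted from \eqref{Amazing}. Establishing (or citing) the raw conditional-variance formula for $(\xi_t)$ is the other load-bearing step.
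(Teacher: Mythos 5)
Your proposal is correct, and its skeleton is the same as the paper's: reduce to $0<s<t<u<1$ by time inversion, transfer the two-sided conditional variance from $(\xi_t)$ to $(Y_t)$ via Lemma \ref{L-two-sided}, apply the reparametrization \eqref{stu}, and use the affine identity $\tfrac{Y_{u'}-Y_{s'}}{u'-s'}=\tfrac{p}{r}+v\Delta_{s,u}+v\widetilde\Delta_{s,u}$ to land on \eqref{EQ:q-Var}. The one genuine divergence is how you obtain the input formula $\Var(\xi_t\mid\xi_s,\xi_u)=\tfrac{(t-s)(u-t)}{u-s+a}\,V\bigl(\tfrac{\xi_u-\xi_s}{u-s}\bigr)$, which is the paper's \eqref{miniVY}--\eqref{miniVV}. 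The paper proves it from scratch: differentiate the joint Laplace transform, use \eqref{Var-Funct} and \eqref{QV} to eliminate $\kappa''$, and invoke \cite[Section 1.1.3]{Kagan-Linnik-Rao:1973}. You instead appeal to the known fact that standardized L\'evy--Meixner processes are quadratic harnesses with $\gamma=1$, $\ceta=\sigma=0$, plus affine equivariance of conditional variance. As stated, that appeal only yields a formula of this shape with \emph{some} coefficients $\ctheta,\tau$; the quantitative identifications --- that the shift in the denominator is exactly $a$ and the quadratic in the numerator is exactly $V$ --- are precisely the content of the paper's Laplace-transform computation, so this step still requires either that computation or a precise external citation. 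You do flag it as load-bearing, but it is the one step your write-up does not actually discharge. In compensation, you are more complete than the paper where it says ``the calculation is omitted'': the identity $v^2(r-a)=V(m)$ extracted from \eqref{Amazing}, the denominator identity $r(u-s)+a(1-u)(1-s)=(r-a)\bigl(u(1+s\sigma)+\tau-s\gamma\bigr)$, and the expansion of the quadratic $V$ into $1$, $\ceta\widetilde\Delta_{s,u}+\ctheta\Delta_{s,u}$, and $\sigma\widetilde\Delta_{s,u}^2+\tau\Delta_{s,u}^2-(1-\gamma)\Delta_{s,u}\widetilde\Delta_{s,u}$ are all correct (they are consistent with every row of Table \ref{Table2}), and together they document explicitly why the parameters of Theorem \ref{T-main} come out as stated.
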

\begin{proof}
 For $0<s<t<u<1$,  the conditional law of $Y_t$ under the bivariate conditioning is the same as the conditional  law of $\xi_t$, see \eqref{Inverse-trans}.
 So $\Var(Y_t|Y_s,Y_u) $ is a quadratic function of $Y_s,Y_u$, as determined by the underlying L\'evy-Meixner process $(\xi_t)$.
  Specifically, for a L\'evy process generating \NEF\ with quadratic variance function \eqref{QV}, we have
  \begin{equation}\label{miniVY}
 \Var(Y_t|Y_s,Y_u)=\frac{(t-s)(u-t)}{u-s+a}V\left(\frac{Y_u-Y_s}{u-s}\right).
 \end{equation}
 Indeed, $\Var(\xi_t|\xi_s,\xi_u)=\Var(\xi_t-\xi_s|\xi_u-\xi_s)$ is given by the same expression as $\Var(\xi_{t-s}|\xi_{u-s}) $.
 To end the proof, we use the Laplace transform to show that for $t<u$,
 \begin{equation}\label{miniVV}
 \Var(\xi_{t}|\xi_{u})=\frac{t(u-t)}{u+a}V(\tfrac{\xi_u}{u}).
\end{equation}
 Differentiating the joint Laplace transform
 $\E\left(\exp (z_1\xi_t+z_2\xi_u)\right)=\exp(t\kappa(z_1+z_2)+(u-t)\kappa(z_2))$
 at $z_1=0$ we get
 \begin{eqnarray}
\label{t2}
   \E \left(\xi_t^2e^{z_2\xi_u}\right)&=&t\kappa''(z_2) e^{u\kappa(z_2)}+ t^2 (\kappa'(z_2))^2 e^{u\kappa(z_2)}.
\\
\label{z}
   \E\left( \xi_ue^{z_2\xi_u}\right)&=&u\kappa'(z_2) e^{u\kappa(z_2)}.
\\
\label{z2}
  \E \left(\xi_u^2e^{z_2\xi_u}\right)&=&u\kappa''(z_2) e^{u\kappa(z_2)}+ u^2 (\kappa'(z_2))^2 e^{u\kappa(z_2)}.
\end{eqnarray}
  Since $\kappa''(z_2)=V(\kappa'(z_2))=a (\kappa'(z_2))^2+b\kappa'(z_2)+c$, we can use \eqref{z} and \eqref{z2} to express $\kappa'(z_2) e^{u\kappa(z_2)}$ and $(\kappa'(z_2))^2 e^{u\kappa(z_2)}$ in terms of
  $\E\left( \xi_u e^{z_2\xi_u}\right)$ and $  \E\left( \xi_u^2e^{z_2\xi_u}\right)$.
  Inserting these expressions into right hand side of \eqref{t2}, we get
$$
\E\left( \left(\xi_t^2-\frac{t^2}{u^2}\xi_u^2\right)e^{z_2\xi_u}\right)=\frac{t(u-t)}{u+a}\E\left(\left(a\frac{\xi_u^2}{u^2}+b\frac{\xi_u}{u}+c\right)e^{z_2\xi_u}\right)
.$$
Since $E(\xi_t|\xi_u)=t \xi_u/u$, it is well known that  the last identity implies \eqref{miniVV}, see  \cite[Section 1.1.3]{Kagan-Linnik-Rao:1973}. This proves \eqref{miniVV} and hence \eqref{miniVY} follows.

Once we have \eqref{miniVY}, the reasoning is elementary.
Using notation \eqref{stu},
\begin{multline*}
\Var(Z_t|Z_s,Z_u)=\frac{(1-t)^2}{r^2v^2}\Var(Y_{t'}|Y_{s'},Y_{u'}) 
=
 \frac{(1-t)^2(t'-s')(u'-t')}{r^2v^2(u'-s'+a)}V\left(\frac{Y_{u'}-Y_{s'}}{u'-s'}\right).
\end{multline*}
Noting that
 $$
 \frac{Y_{u'}-Y_{s'}}{u'-s'}=\frac{p}{r}+v\frac{Z_u-Z_s}{u-s}+v\frac{uZ_s-sZ_u}{u-s},
 $$
 we verify that
$(Z_t) $ is a quadratic harness on $(0,1)$ with parameters 
$ \ceta= \ctheta$, $\sigma=\tau$,
 $\gamma=1+2\sqrt{\sigma\tau}$,
as specified in Theorem \ref{T-main}. (The calculation is omitted.)

 Quadratic harness property on $(1,\infty)$ is a consequence of time-inversion, and the parameters are preserved, as $ \ceta= \ctheta$, $\sigma=\tau$.
\end{proof}

\begin{claim}\label{EW|YY'-exp}
If $(\xi_t)$ is a L\'evy-Meixner process with variance function   \eqref{QV} and $\Theta$ has  distribution  \eqref{Exponential-law}, then
$\Var(Z_1|Z_s,Z_u)$ is given by the formula \eqref{EQ:q-Var} with $t=1$, and  with the  parameters $\ceta,\ctheta,\sigma,\tau,\gamma$ as specified in Theorem \ref{T-main}. \end{claim}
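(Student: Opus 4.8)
The plan is to reduce the claim to a single conditional-moment computation for $\kappa'(\Theta)$ and then expand. Since $Z_1=\tfrac1v(\kappa'(\Theta)-p/r)$, and conditioning on $(Z_s,Z_u)$ is the same as conditioning on $(Y_{s'},Y'_{u'})$ with $s',u'$ as in \eqref{rprmt}, I start from
\[
\V(Z_1\mid Z_s,Z_u)=\tfrac1{v^2}\,\V\big(\kappa'(\Theta)\mid Y_{s'},Y'_{u'}\big).
\]
By \eqref{mumu} the conditional law of $\Theta$ given $Y_{s'}=x,Y'_{u'}=z$ has density proportional to $e^{w\theta-R\kappa(\theta)}$, with $w=x+z+p$ and $R=r+s'+u'$; this is exactly the form \eqref{Exponential-law} but with $(p,r)$ replaced by $(w,R)$. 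So the whole task is to compute the first two conditional moments of $\kappa'(\Theta)$ under such a density.

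The heart of the proof is the identity
\[
\V\big(\kappa'(\Theta)\mid Y_{s'},Y'_{u'}\big)=\frac{1}{R-a}\,V\!\left(\frac{Y_{s'}+Y'_{u'}+p}{R}\right),
\]
proved exactly as \eqref{Amazing}: integrating $\int\kappa''(\theta)e^{w\theta-R\kappa(\theta)}\,d\theta$ by parts and dropping the boundary term via \eqref{Asume2} gives $\E(\kappa''(\Theta)\mid\cdots)=R\,\E(\kappa'(\Theta)^2\mid\cdots)-w\,\E(\kappa'(\Theta)\mid\cdots)$. Substituting the quadratic $\kappa''=V\circ\kappa'=a(\kappa')^2+b\kappa'+c$ from \eqref{QV} turns this into a linear relation between the first two conditional moments of $\kappa'(\Theta)$; feeding in the conditional mean $\E(\kappa'(\Theta)\mid\cdots)=w/R$ from \eqref{mu-phi0} and subtracting its square yields the displayed formula. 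This step is where the quadratic variance function is essential: it is precisely what closes the recursion between the first and second moments. Taking $(w,R)=(p,r)$ in the same identity gives the companion fact $v^2=V(m)/(r-a)$ with $m=p/r$, which I also need.

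It then remains to expand and match. Using \eqref{Y2ZZ} I write $w/R=m+v(\Delta_{s,u}+\widetilde\Delta_{s,u})$ in the shorthand \eqref{Z2Delta}, and expand (exactly, as $V$ is quadratic) $V(m+\ell)=V(m)+V'(m)\ell+a\ell^2$ with $\ell=v(\Delta_{s,u}+\widetilde\Delta_{s,u})$. From \eqref{rprmt} one gets $R=r(u-s)/\big((1-s)(u-1)\big)$, and a direct check using $\sigma=\tau=a/(r-a)$, $\gamma=(r+a)/(r-a)$ shows that the denominator of $F_{1,s,u}$ equals $(R-a)(1-s)(u-1)/(r-a)$, so that $F_{1,s,u}=(r-a)/(R-a)$. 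Combining everything, the target identity $\V(Z_1\mid Z_s,Z_u)=F_{1,s,u}K(Z_s,Z_u)$ reduces, after clearing $R-a$, to matching the coefficients of $1$, of $\Delta_{s,u}+\widetilde\Delta_{s,u}$, and of $(\Delta_{s,u}+\widetilde\Delta_{s,u})^2$ against \eqref{K2Delta}: the constant matches because $v^2=V(m)/(r-a)$; the linear term matches by the definition $\alpha=\beta=V'(m)/\sqrt{V(m)(r-a)}$; and the single square $a(\Delta_{s,u}+\widetilde\Delta_{s,u})^2$ reproduces $\sigma\widetilde\Delta_{s,u}^2+\tau\Delta_{s,u}^2-(1-\gamma)\Delta_{s,u}\widetilde\Delta_{s,u}$ scaled by $r-a$, since $(r-a)\sigma=(r-a)\tau=a$ and $-(r-a)(1-\gamma)=2a$. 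In particular, the equalities $\alpha=\beta$, $\sigma=\tau$ and $\gamma=1+2\sqrt{\sigma\tau}$ are forced by the fact that the randomization perturbs the mean only through the single combination $\Delta_{s,u}+\widetilde\Delta_{s,u}$.

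I expect the integration-by-parts step to be the main obstacle, not for its algebra but because the boundary terms must be killed for the shifted pair $(w,R)=(x+z+p,\,r+s'+u')$ rather than for $(p,r)$. This is legitimate because $R>r>a$ (so the extra factor $e^{-(s'+u')\kappa(\theta)}$ only accelerates the decay) and, by infinite divisibility, $x+z$ lies in the support of $g_{s'+u'}$ with $p+x+z$ in the admissible range; hence \eqref{Asume2} still applies and the boundary contributions vanish, case by case as in Section \ref{Sect:Meinxer-Examples}. Everything past that point is elementary quadratic algebra.
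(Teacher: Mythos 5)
Your proposal is correct and follows essentially the same route as the paper's proof: the same integration by parts in $\theta$ (boundary terms killed via \eqref{Asume2}), with the quadratic variance function \eqref{QV} closing the recursion between the first two conditional moments of $\kappa'(\Theta)$ to give $\Var\bigl(\kappa'(\Theta)\mid Y_{s'},Y'_{u'}\bigr)=\tfrac{1}{R-a}V\bigl(\tfrac{Y_{s'}+Y'_{u'}+p}{R}\bigr)$ with $R=r+s'+u'$; your proof is in fact more complete, since you explicitly verify $v^2=V(m)/(r-a)$ and $F_{1,s,u}=(r-a)/(R-a)$ and carry out the coefficient matching, whereas the paper stops at proportionality to $V$ and appeals to the parameters already identified in Claim \ref{Cm}. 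One small caution: your blanket justification that the extra factor $e^{-(s'+u')\kappa(\theta)}$ ``only accelerates the decay'' fails at the left endpoint in the gamma (and is delicate in the negative binomial) case, where $\kappa(\theta)\to-\infty$ as $\theta\to-\infty$; the boundary term still vanishes there because $e^{(x+z+p)\theta}$ with $x+z+p>0$ dominates the polynomial growth of $(1-\theta)^{s'+u'}$, which is exactly the case-by-case check you correctly fall back on.
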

\begin{proof}  Using the notation \eqref{rprmt}, we show that if the variance function $V(m)$ is a quadratic expression, then $\Var(\kappa'(\Theta)|Y_{s'},Y'_{u'})$ is a quadratic polynomial in $Y_{s'},Y'_{u'}$.

As previously, let $\varphi(x,y)$ be a bounded measurable function. Recall \eqref{mumu}. Using \eqref{mu-phi0} and integration by parts with respect to $\theta$, we get
\begin{multline*}
\iiint\left(\left(\kappa'(\theta)\right)^2-\frac{(x+z+p)^2}{(u'+s'+r)^2}\right)\varphi(x,y)\mu(dx,d\theta,dz)
\\=
\iiint \kappa'(\theta)\left(\kappa'(\theta)-\frac{x+z+p}{u'+s'+r}\right)\varphi(x,y)\mu(dx,d\theta,dz)
\\
=\tfrac{C}{u'+s'+r}\iint \varphi(x,y)\Big[\kappa'(\theta) e^{ (x+z+p)\theta-(u'+s'+r)\kappa(\theta)}\Big|_{\theta=\theta_0}^{\theta=\theta_1} \\+ \int_{\theta_0}^{\theta_1}\kappa''(\theta) e^{ (x+z+p)\theta-(u'+s'+r)\kappa(\theta)}d\theta \Big] g_{s'}(dx) g_{u'}(dz).
\end{multline*}
Since \eqref{Asume2} gives
$$
\kappa'(\theta) e^{ (x+z+p)\theta-(u'+s'+r)\kappa(\theta)}\Big|_{\theta=\theta_0}^{\theta=\theta_1}=0
$$
for all admissible $x,z$, we get
\begin{multline}\label{min-id}
\iiint\left(\left(\kappa'(\theta)\right)^2-\frac{(x+z+p)^2}{(u'+s'+r)^2}\right)\varphi(x,y)\mu(dx,d\theta,dz)
\\=\tfrac{C}{u'+s'+r} \iint \varphi(x,y) \int_{\theta_0}^{\theta_1}V(\kappa'(\theta) ) e^{ (x+z+p)\theta-(u'+s'+r)\kappa(\theta)}d\theta  g_{s'}(dx) g_{u'}(dz).
\end{multline}
Now we use \eqref{QV} to convert \eqref{min-id} into an equation for  $\iiint\left(\kappa'(\theta)\right)^2\varphi(x,y)\mu(dx,d\theta,dz)$. After a calculation (use \eqref{mu-phi0}), from this equation we get
\begin{multline*}
\iiint\left(\left(\kappa'(\theta)\right)^2-\frac{(x+z+p)^2}{(r+s'+u')^2}\right)\varphi(x,y)\mu(dx,d\theta,dz)
\\=\frac{1}{r+s'+u'-a}
\iint \left(a\frac{(x+z+p)^2}{(r+s'+u')^2}+
b \frac{x+z+p}{r+s'+u'}+ c
\right)\varphi(x,y)
 g_{s'}(dx) g_{u'}(dz)
 \\
 =\frac{1}{r+s'+u'-a}
\iint V\left(  \frac{x+z+p}{r+s'+u'}\right)\varphi(x,y)
 g_{s'}(dx) g_{u'}(dz).
\end{multline*}
Thus
$$
\Var((\kappa'(\Theta))^2|Y_{s'},Y'_{u'})
=\frac{1}{r+s'+u'-a} V\left( \frac{Y_{s'}+Y' _{u'}+p}{r+s'+u'}\right).
$$
From  \eqref{Y2ZZ}, for some non-random constant $C_{s,u}$ we have
$$\Var(Z_1|Z_s,Z_u)=C_{s,u}V\left(\frac{p}{r}+v\frac{Z_u-Z_s}{u-s}+v\frac{uZ_s-sZ_u}{u-s}\right),$$
 matching the parameters that we already got for the case $0<s<t<u<1$.
\end{proof}
\begin{proof}[Proof of Theorem \ref{T-main}]
From Propositions \ref{L-Markov} and  \ref{P2.3} we know that $(Z_t)$ is a Markov harness. Proposition \ref{P2.4} shows that its covariance is \eqref{EQ:cov}.
Claims \ref{Cm} and \ref{EW|YY'-exp}  show that the assumptions of Lemma \ref{Generic-lemma} are satisfied, so $(Z_t)$ is a quadratic harness on $(0,\infty)$ with parameters  as specified in Theorem \ref{T-main}.

\end{proof}

\subsection*{Acknowledgement}
We would like to thank Persi Diaconis for a discussion that lead us to reference \cite{Diaconis:1979}.
This research was partially supported by NSF
grant \#DMS-0904720, and by the Taft Research Center.

\bibliographystyle{plain}
\bibliography{../Vita,bim-2010,Mobius_08_cpy}

\end{document}